\DeclareRobustCommand{\thmprime}{
 \begingroup
 \expandafter\in@\expandafter b\expandafter{\f@series}
 \ifin@ \boldmath \fi
 $\m@th{}^{\prime}$
 \endgroup
}
\newtheorem{theorem}{Theorem}
\newtheorem{conj}{Conjecture}
\newtheorem{cor}{Corollary}
\newtheorem{lemma}{Lemma}
\newtheorem{prop}{Proposition}
\theoremstyle{definition}
\newtheorem*{defn}{Definition}
\NewDocumentEnvironment{manual}{O{obs}m}
 {
 \addtocounter{obs}{-1}
 
 \begin{#1}
 }
 {\end{#1}}
 \NewDocumentEnvironment{manual2}{O{lemma}m}
 {
 \addtocounter{lemma}{-1}
 
 \begin{#1}
 }
 {\end{#1}}
 \NewDocumentEnvironment{manual3}{O{cor}m}
 {
 \addtocounter{cor}{-1}
 
 \begin{#1}
 }
 {\end{#1}}
\newcommand\dist{\operatorname{Dist}}
\newcommand{\gm}{\mu_t(G)}
\begin{document}

\title{Distinguishing Generalized Mycielskian Graphs}

\author{Debra Boutin\thanks{{\tt dboutin@hamilton.edu}, Hamilton College, Clinton, NY}, Sally Cockburn \thanks{{\tt scockbur@hamilton.edu}, Hamilton College, Clinton, NY}, Lauren Keough \thanks{{\tt keoulaur@gvsu.edu}, Grand Valley State University, Allendale Charter Township, MI}, Sarah Loeb \thanks{{\tt sloeb@hsc.edu}, Hampden-Sydney College, Hampden-Sydney, VA}, K.~E. Perry \thanks{{\tt kperry@soka.edu}, Soka University of America, Aliso Viejo, CA}, Puck Rombach \thanks{{\tt puck.rombach@uvm.edu}, University of Vermont, Burlington, VT}}

\date{\today}

\maketitle

\begin{abstract}
A graph $G$ is \emph{$d$-distinguishable} if there is a coloring of the vertices with $d$ colors so that only the trivial automorphism preserves the color classes. The smallest such $d$ is the \emph{distinguishing number}, $\dist(G)$. The \emph{Mycielskian} of a graph $G$, $\mu(G)$, is constructed by adding a shadow vertex $u_i$ for each vertex $v_i$ of $G$, one additional vertex $w$, and  edges so that $N(u_i)~=~N_G(v_i)~\cup~\{w\}$. The \emph{generalized Mycielskian}, $\gm$, is a Mycielskian graph with $t$ layers of shadow vertices, each with edges to layers above and below. This paper examines the distinguishing number of the traditional and generalized Mycielskian graphs. Notably, if $G~\neq ~K_1,~K_2$ and 
the number of isolated vertices in $\gm$ is at most $\dist(G)$, then $\dist(\gm) \le \dist(G)$. This result proves and exceeds a conjecture of Alikhani and Soltani.
\end{abstract}

\section{Introduction}

Vertex colorings can be a used to study the symmetries of a graph, whether or not the automorphism group of the graph is explicitly known. In this paper we study vertex colorings that are not preserved under any nontrivial automorphism. Such colorings are said to be \emph{distinguishing}. The necessary (and sufficient) feature of a distinguishing coloring is that every vertex in the graph can be uniquely identified by its graph properties and its color.

More formally, a coloring of the vertices of a graph $G$ with the colors $1,\ldots, d$ is called a \emph{$d$-distinguishing coloring} if no nontrivial automorphism of $G$ preserves the color classes. 
The distinguishing number of $G$,  $\dist(G)$, is the least $d$ such that $G$ has a $d$-distinguishing coloring. Albertson and Collins introduced graph distinguishing in \cite{AC1996}. In 1977 \cite{Ba1977}, Babai independently introduced the same definition, calling it an \emph{asymmetric coloring}.  In this paper, we will use the terminology of Albertson and Collins. There has been an increasing amount of interest in graph distinguishing since its introduction. 

Much of the work in the last few decades has dealt with large families of graphs, producing results that frequently show that all but a finite number of graphs in the family have distinguishing number $2$. Examples of such families of finite graphs include: hypercubes $Q_n$ with $n\geq 4$ \cite{BC2004}, Cartesian powers $G^n$ for a connected graph $G\ne K_2,K_3$ and $n\geq 2$ \cite{A2005, IK2006,KZ2007}, Kneser graphs $K_{n:k}$ with $n\geq 6, k\geq 2$ \cite{AB2007}, and (with seven small exceptions) $3$-connected planar graphs \cite{FNT2008}. Examples of such families of infinite graphs include: the denumerable random graph \cite{IKT2007}, the infinite hypercube \cite{IKT2007}, locally finite trees with no vertex of degree 1 \cite{WZ2007}, and denumerable vertex-transitive graphs of connectivity 1 \cite{STW2012}. 

Each of the Mycielskian and generalized Mycielskian constructions was introduced to build increasingly large graphs with a given fixed property, but with increasing chromatic numbers. In \cite{M1955}, Mycielski introduced his (traditional) construction, denoted $\mu(G)$, to build from a triangle-free graph $G$ another triangle-free graph with larger chromatic number. Similarly, the generalized Mycielskian construction with $t$ levels, denoted $\mu_t(G)$, was defined to build from a graph $G$ with no small odd cycles, another graph with no small odd cycles, but with   larger chromatic number.  This generalized construction was introduced by Stiebitz~\cite{stiebitz1985beitrage} in 1985 (cited in \cite{ct2001}) and independently by Van Ngoc~\cite{van1987graph} in 1987 (cited in~\cite{van19954}).  Generalized Mycielskian graphs are also called cones over graphs.  Both constructions are formally defined in Section \ref{sec:autos}.

Thus, by design, the traditional and generalized Mycielskian constructions fix some graph parameters while increasing others.  This makes Mycielskian graphs useful for testing and proving  relationships between graph parameters. Recently, there has been significant work studying the effect of these constructions on a variety of vertex parameters. See for example \cite{AbRa2019, BaRa2015, ChXi2006, FiMcBo1998, LWLG2006, PaZh2010} in which various parameters for $\mu(G)$ and $\gm$ are found in terms of the same parameters for $G$.

In this paper we investigate the relationships between the distinguishing numbers of $G$, $\mu(G)$ and $\gm$, for simple graphs $G$. We do this by exploiting the structural properties of $G$ that are inherited when the Mycielskian and generalized Mycielskian constructions are applied. In 2018 \cite{AS2018}, Alikhani and Soltani compared the distinguishing number of $\mu(G)$ to the distinguishing number of $G$ for twin-free $G$. Letting $N(v)$ denote the set of neighbors of $v$, two vertices $x$ and $y$ are called \emph{twins} if $N(x)=N(y)$. A graph having no twins is said to be \emph{twin-free}. For example, vertices $v_1$, $v_2$, and $v_3$ in Figure~\ref{fig:muK13x2} are mutually twin vertices; so are $u_1, u_2,$ and $u_3$. If two vertices of a graph $G$ are twins, then there is an automorphism of $G$ that  exchanges them and fixes the remaining vertices. Thus, a distinguishing coloring must give distinct colors to each vertex in a set of mutual twins. Alikhani and Soltani proved that if $G$ has at least two vertices and is twin-free, then $\dist(\mu(G)) \le \dist(G)+1$.  They then conjectured the following.

\begin{conj}\cite{AS2018} \label{conj:A&S}
 Let $G$ be a connected graph of order $n \ge 3$. Then $\dist(\mu(G))$ $\leq$ $\dist(G)$ except for a finite number of graphs.
\end{conj}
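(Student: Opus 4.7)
The plan is to build a distinguishing coloring of $\mu(G)$ with $\dist(G)$ colors by propagating a distinguishing coloring of $G$ to the shadow layer, leveraging the rigidity of the Mycielskian automorphism structure. The apex vertex $w$ should be fixed by every automorphism of $\mu(G)$, since $N(w)$ is exactly the maximal independent set $U=\{u_1,\dots,u_n\}$; once $w$ is pinned, $U$ is identified as $N(w)$, so the partition $V(G)\sqcup U\sqcup\{w\}$ is preserved by every automorphism, reducing the action to a pair of compatible permutations on $V(G)$ and on $U$.

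First I would prove a structural lemma: for every connected $G$ of order $n\geq 3$ outside a finite exceptional list, every $\sigma\in\aut(\mu(G))$ fixes $w$, preserves $V(G)$ and $U$ setwise, and induces the same index permutation $\pi$ on each, where $\pi\in\aut(G)$. This yields a group embedding $\aut(\mu(G))\hookrightarrow\aut(G)$. Next, given a $\dist(G)$-distinguishing coloring $c$ of $G$, define $c'$ on $\mu(G)$ by $c'(v_i)=c'(u_i)=c(v_i)$ and assign $w$ any color in $\{1,\dots,\dist(G)\}$. If $\sigma$ is color-preserving, then the induced $\pi\in\aut(G)$ preserves the color classes of $c$ and so must be the identity; since the action on $U$ is the same $\pi$, every shadow vertex is fixed as well, and $\sigma$ is trivial.

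The main obstacle is the structural lemma, specifically ruling out automorphisms that swap vertices between $V(G)$ and $U$. Since $\deg_{\mu(G)}(v_i)=2\deg_G(v_i)$ and $\deg_{\mu(G)}(u_i)=\deg_G(v_i)+1$, the two degrees coincide exactly when $v_i$ is a pendant of $G$, so pendant/shadow interchanges are the most dangerous. The approach is a careful case analysis pinning each shadow vertex by second-neighborhood data (the fact that $u_i$ is characterized as the unique vertex in $N(w)$ whose $G$-neighbors are exactly $N_G(v_i)$, modulo twins in $G$), and showing that any remaining non-rigid behavior forces $G$ into a finite list of small graphs (short paths, stars, small trees and cycles). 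These small exceptions are absorbed into the ``finite number of graphs'' clause of the conjecture.

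Finally, the degenerate case $\dist(G)=1$ must also be handled, since an asymmetric $G$ can nonetheless yield a $\mu(G)$ with nontrivial symmetry; this case contributes only finitely many exceptions, verified directly by enumerating small asymmetric graphs whose Mycielskians acquire extra automorphisms. Combined with the structural lemma and the explicit coloring above, this completes the proof of the bound $\dist(\mu(G))\leq\dist(G)$ for all but finitely many connected $G$.
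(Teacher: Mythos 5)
Your overall strategy --- pin the root $w$, deduce that $V(G)$ and $U$ are preserved setwise, push a $\dist(G)$-distinguishing coloring of $G$ onto the shadow layer, and kill residual shadow permutations by noting that twins of $G$ already receive distinct colors --- is exactly the route the paper takes. But your structural lemma, as stated, is false, and the failure is not confined to finitely many graphs. For \emph{every} star $K_{1,m}$ with $m\ge 2$, the graph $\mu(K_{1,m})$ admits an automorphism that swaps $w$ with the shadow of the center and swaps each leaf with its own shadow (the paper exhibits this for $K_{1,3}$ in Figure~\ref{fig:muK13x2}, and it works for all $m$). So the set of connected $G$ with $n\ge 3$ for which some automorphism of $\mu(G)$ moves $w$ is infinite, and these graphs cannot be ``absorbed into the finite number of graphs clause''; you must handle them by an explicit coloring. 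The paper does precisely this: Lemmas~\ref{lem:phiwisv} and~\ref{lem:phiwisu} show that stars are the \emph{only} graphs for which $w$ can move (so the exceptional family is completely identified, not merely bounded), and the proof of Theorem~\ref{thm:A&Sconj} then gives an $m$-coloring of $\mu(K_{1,m})$ and verifies it is distinguishing by first re-deriving that $w$ is fixed by any color-preserving automorphism (it is the unique vertex of degree $m+1>2$ in its color class) and then using distances from $w$. Without that second half your argument only proves the conjecture for non-star graphs, which is not the statement.

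Two smaller points. First, the claimed embedding $\aut(\mu(G))\hookrightarrow\aut(G)$ is not injective when $G$ has twins: if $N_G(v_i)=N_G(v_j)$ then transposing $u_i$ and $u_j$ alone is a nontrivial automorphism of $\mu(G)$ inducing the identity on $G$. Your coloring survives this (twins are forced to get distinct colors in any distinguishing coloring of $G$, hence so do their shadows), and this is exactly how the paper closes the argument, but the lemma should be stated at the level of the induced permutation being an automorphism of $G$ rather than as an embedding. Second, the worry about asymmetric $G$ is unfounded once the structural lemma is in place: for non-star connected $G$ the restriction of any $\phi\in\aut(\mu(G))$ to $V(G)$ is an automorphism of $G$, and an asymmetric graph has no twins, so $\mu(G)$ is again asymmetric; no enumeration of small cases is needed there.
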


In Theorem~\ref{thm:A&Sconj} (Section \ref{sec:dist}), we prove a statement that is slightly stronger than the above conjecture. 
In particular, we show the conjecture is true for all graphs on at least 3 vertices; not only connected graphs. 
We extend the result to generalized Mycielskians by proving $\dist(\gm)\leq \dist(G)$, unless $G = K_1$, $G=K_2$ and $t=1$, or the number of isolates in $\gm$ exceeds $\dist(G)$. In the last case, $\dist(\gm)$ is exactly the number of isolated vertices.

The paper is organised as follows. The definition of the Mycielskian of a graph $G$, and lemmas regarding automorphisms of $\mu(G)$, are covered in Section~\ref{sec:autos}. The same topics for the generalized Mycielskian of $G$ are developed in Section~\ref{sec:gen}. Theorem~\ref{thm:A&Sconj} on the distinguishing number of $\mu(G)$ and $\gm$ is stated and proved in Section~\ref{sec:dist}. 

In this paper, all graphs are finite simple graphs. We will denote the number of vertices of $G$ by $|G|$ and the degree of a vertex $v$ by $d(v)$.

\section{Mycielskian Graphs} \label{sec:autos}

In this section, we define and examine the traditional Mycielski construction. Suppose $G$ is a graph with $V(G) = \{v_1,\ldots,v_n\}$. The \emph{Mycielskian of $G$}, denoted $\mu(G)$, has vertices $\{v_1,\ldots,v_n,u_1,\ldots,u_n,w\}$. For each edge $v_i v_j$ in $G$, the graph $\mu(G)$ has edges $v_i v_j, v_i u_j$, and $u_i v_j$. In addition, $\mu(G)$ has edges $u_i w$ for $i \in \{1,\ldots,n\}$. Thus, $\mu(G)$ has an isomorphic copy of $G$ on vertices $\{v_1,\ldots,v_n\}$. We refer to vertices from $\{u_1,\ldots,u_n\}$ as \emph{shadow vertices} and vertices from $\{v_1,\ldots,v_n\}$ as \emph{original vertices}. The vertex $w$ that dominates the shadow vertices is called the \emph{root}. As an example, $\mu(K_{1,3})$ is shown in Figure \ref{fig:muK13x2}.

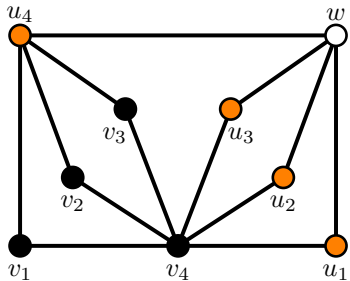
\begin{figure}[ht]
  \centering
  
\begin{tikzpicture}[scale=.7]
\draw[black!100,line width=1.5pt] (-3,0) -- (3,0); 
\draw[black!100,line width=1.5pt] (-3,4) -- (3,4); 
\draw[black!100,line width=1.5pt] (-3,0) -- (-3,4); 
\draw[black!100,line width=1.5pt] (3,0) -- (3,4); 
\draw[black!100,line width=1.5pt] (0,0) -- (2,1.3); 
\draw[black!100,line width=1.5pt] (0,0) -- (1,2.6); 
\draw[black!100,line width=1.5pt] (3,4) -- (2,1.3); 
\draw[black!100,line width=1.5pt] (3,4) -- (1,2.6);
\draw[black!100,line width=1.5pt] (0,0) -- (-2,1.3); 
\draw[black!100,line width=1.5pt] (0,0) -- (-1,2.6); 
\draw[black!100,line width=1.5pt] (-3,4) -- (-2,1.3); 
\draw[black!100,line width=1.5pt] (-3,4) -- (-1,2.6);
\draw[fill=black!100,line width=1] (0,0) circle (.2);
\draw[fill=black!100,line width=1] (-3,0) circle (.2);
\draw[fill=orange!100,line width=1] (3,0) circle (.2);
\draw[fill=white!100,line width=1] (3,4) circle (.2);
\draw[fill=orange!100,line width=1] (-3,4) circle (.2);
\draw[fill=orange!100,line width=1] (2,1.3) circle (.2);
\draw[fill=orange!100,line width=1] (1,2.6) circle (.2);
\draw[fill=black!100,line width=1] (-2,1.3) circle (.2);
\draw[fill=black!100,line width=1] (-1,2.6) circle (.2);
\draw (0,-.5) node{$v_4$};
\draw (-3,-.5) node{$v_1$};
\draw (3,-.5) node{$u_1$};
\draw (3,4.4) node{$w$};
\draw (-3,4.4) node{$u_4$};
\draw (-2,.8) node{$v_2$};
\draw (-1.2,2.1) node{$v_3$};
\draw (2,.8) node{$u_2$};
\draw (1.2,2.1) node{$u_3$};
\end{tikzpicture}
  \caption{The graph $\mu(K_{1,3}).$ The vertices labeled $v_i$ are from $K_{1,3}$, the vertices labeled $u_i$ are the shadow vertices, and $w$ is the root.} \label{fig:muK13x2}
\end{figure}

We will employ the following properties of $\mu(G)$ and automorphisms throughout our proofs.

\medskip\noindent{\bf \boldmath Facts about $\mu(G)$:}  Let $|G|=n$ and $d_G(v_i)=k$. With the notation given above, the Mycielski construction gives us the following: $|\mu(G)|=2n+1$; $d_{\mu(G)}(w) = n$; $d_{\mu(G)}(v_i) = 2k$; $d_{\mu(G)}(u_i)= k+1$; $N_{\mu(G)}(u_i)\setminus\{w\} = N_{G}(v_i)$; $N_{\mu(G)}(w)$ is an independent set (consisting of all shadow vertices).

For the remainder of this paper, when its use is unambiguous, we will drop the subscript $\mu(G)$ from neighborhoods and degrees. That is, unless otherwise noted, for all $x\in V(\mu(G))$, $N(x)=N_{\mu(G)}(x)$ and $d(x)= d_{\mu(G)}(x)$.

\medskip\noindent{\bf \boldmath Facts about Automorphisms of $G$:} Let $\phi$ be an automorphism of a graph $G$ and let $x,y\in V(G)$. Since automorphisms preserve adjacency and nonadjacency of vertex pairs, every property involving adjacency or nonadjacency is also preserved. In particular, degrees: $d(x)=d(\phi(x))$; distances: $d(x,y) = d(\phi(x),\phi(y))$; neighborhoods: ${N(x)} ={N(\phi(x))}$.

First we prove that if there is an automorphism of $\mu(G)$ such that the image of $w$ is an original vertex, then $G$ has no dominating vertex.

\begin{lemma}\label{lem:nodomvertex}
Let $G$ be a graph with $|G|\geq 3$ and let $\phi$ be an automorphism of $\mu(G)$. If $\phi(w)$ is an original vertex, then $G$ cannot have a dominating vertex.
\end{lemma}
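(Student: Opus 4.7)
The plan is to argue by contradiction: assume $G$ has a dominating vertex $v_d$ and that some automorphism $\phi$ of $\mu(G)$ sends $w$ to an original vertex $v_i$. Using the facts that $\phi$ preserves degrees and preserves the property of a neighborhood being independent, I would extract two constraints on $v_i$. First, since $d_{\mu(G)}(w) = n$ and $d_{\mu(G)}(v_i) = 2 d_G(v_i)$, equality of degrees forces $d_G(v_i) = n/2$. Second, since $N_{\mu(G)}(w)$ is independent, so is $N_{\mu(G)}(v_i)$.

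Next I would split on whether $v_i$ is itself the dominating vertex. If $v_i = v_d$, then $d_G(v_d) = n - 1$, and combined with $d_G(v_i) = n/2$ this gives $n/2 = n-1$, hence $n = 2$, contradicting $|G| \geq 3$.

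If instead $v_i \neq v_d$, then $v_d \in N_G(v_i)$ by dominance, so $v_d \in N_{\mu(G)}(v_i)$. Since $|G| \geq 3$ forces $d_G(v_i) = n/2 \geq 3/2$, and $d_G(v_i)$ is an integer, we have $d_G(v_i) \geq 2$, so there is some $v_j \in N_G(v_i)$ with $j \neq d$. Then $v_d, v_j \in N_{\mu(G)}(v_i)$, but the edge $v_d v_j$ in $G$ (from dominance) survives in $\mu(G)$, contradicting the independence of $N_{\mu(G)}(v_i)$.

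The argument is short and the only real subtlety is making sure to use $|G| \geq 3$ exactly where it is needed — namely, to guarantee the existence of a second neighbor $v_j \neq v_d$ of $v_i$ in the non-self case (and to rule out $n = 2$ in the self case). The main obstacle, if any, is being careful that the independence of $N_{\mu(G)}(v_i)$ really does follow from that of $N_{\mu(G)}(w)$ — this is immediate from the fact that an automorphism of $\mu(G)$ sends an induced edgeless subgraph to an induced edgeless subgraph, i.e., sends independent sets to independent sets.
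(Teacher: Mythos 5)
Your proof is correct and follows essentially the same route as the paper's: both use degree preservation to get $d_G(\phi(w)) = n/2$, rule out $\phi(w)$ itself being dominating, and then use the independence of $N_{\mu(G)}(\phi(w))$ (inherited from $N_{\mu(G)}(w)$) together with $d_G(\phi(w)) \geq 2$ to rule out a dominating vertex among its neighbors. The only difference is presentational — you frame it as a contradiction with an explicit case split, while the paper argues directly.
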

\begin{proof}
Let $|G|=n$ and assume $\phi(w)=v$ with $d_G(v)=k$. Using facts about $\mu(G)$ and automorphisms we have $d(w)=n$ and so $d(\phi(w))=d(v) = n$. By construction, $d(v) = 2d_G(v) = 2k$, we have $n=2k$.

Since $n\geq 3$ and $n=2k$, we get $k\geq 2$. So $d_G(v)=k=\frac{n}{2}<n{-}1$ and thus $v$ is not dominating in $G$. Thus, any dominating vertex of $G$ must be in $N(v)$. However, as $N(w)$ is independent, so is $N(\phi(w))=N(v)$. Since $d(v) = k \ge 2$, we conclude $G$ has no dominating vertex in $N(v)$, nor thus in $G$.\end{proof}

We now show that, in fact, any automorphism of $\mu(G)$ that does not fix the root $w$ must map it to a shadow vertex.

\begin{lemma} \label{lem:phiwisv}
Let $G$ be a graph with $|G| \ge 3$. Then no automorphism of $\mu(G)$ maps the root $w$ to any original vertex.
\end{lemma}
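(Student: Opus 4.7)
My plan is to argue by contradiction: suppose $\phi \in \aut(\mu(G))$ satisfies $\phi(w) = v$ for some original vertex $v \in V(G)$. Lemma~\ref{lem:nodomvertex} supplies the preliminary constraints: writing $k = d_G(v)$, we have $n = 2k$ and $G$ has no dominating vertex. Moreover, because $N(w)$ is an independent set in $\mu(G)$, so is its image $N(v)$, and since two originals in $N_G(v)$ are $\mu(G)$-adjacent exactly when they are $G$-adjacent, $N_G(v)$ must itself be independent in $G$.

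For the first case, suppose $k$ is even. Let $v' = \phi^{-1}(w)$; applying Lemma~\ref{lem:nodomvertex} to $\phi^{-1}$ (using also that no shadow has $\mu(G)$-degree $n$, which would require $v_i$ to be dominating in $G$) gives $v' \in V(G)$ with $d_G(v') = k$. Consider the shadow $u$ with underlying original $v'$: it lies in $N(w)$, so $\phi(u) \in N(v)$, and $u \not\sim v'$ (no self-loop) forces $\phi(u) \not\sim \phi(v') = w$, so $\phi(u)$ is not a shadow. Thus $\phi(u) \in N_G(v)$, and matching degrees yields $k+1 = d(u) = 2\, d_G(\phi(u))$, which requires $k$ to be odd --- a contradiction in this case.

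It remains to handle $k$ odd. Here I would apply a degree-multiset argument on $N(w)$ and $N(v)$. The multiset of $\mu(G)$-degrees in $N(w)$ is $\{d_G(v_i)+1 : 1 \le i \le n\}$, while that of $N(v)$ decomposes into $\{2 d_G(v_a) : v_a \in N_G(v)\}$ (the original neighbors) together with $\{d_G(v_a)+1 : v_a \in N_G(v)\}$ (the shadow neighbors). Cancelling the shared sub-multiset gives
\[
\{\, d_G(v_i)+1 : v_i \notin N_G(v)\,\} \;=\; \{\, 2 d_G(v_a) : v_a \in N_G(v)\,\},
\]
a multiset identity of size $k$ on each side. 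The right-hand side consists entirely of even positive integers, so $G$ can have no isolated vertex (an isolated $v_i \notin N_G(v)$ would contribute the value $1$ on the left).

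Finally, the bijection reasoning of the even case generalises: writing $v' = \phi^{-1}(w)$, the equivalence "$u_i \sim v'$ iff $v_i \in N_G(v')$" together with $\phi(v') = w$ shows that $\phi(u_i)$ is a shadow precisely when $v_i \in N_G(v')$. Thus $\phi$ restricts to a degree-preserving bijection $\{u_i : v_i \in N_G(v')\} \to \{u_a : v_a \in N_G(v)\}$ and a second bijection $\{u_i : v_i \notin N_G(v')\} \to N_G(v)$ satisfying $d_G(v_i)+1 = 2 d_G(\phi(u_i))$. Combining these with the multiset identity and with the edge decomposition $|E(G)| = k + F + E_T$ (where $F$ counts edges from $N_G(v)$ to $T = V(G)\setminus(N_G(v)\cup\{v\})$ and $E_T$ counts edges inside $T$), one obtains forced equalities such as $F = 2 E_T$ that conflict with the parity and independence constraints already derived. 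The main obstacle is assembling these into a clean final contradiction; I expect the cleanest route is to split on whether $v = v'$ (where $\phi$ is forced to swap $w$ and $v$ and the orbit structure becomes very rigid) or $v \neq v'$ (where the independence of both $N_G(v)$ and $N_G(v')$ pins down the bipartite edge counts in mutually incompatible ways).
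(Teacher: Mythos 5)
Your opening reduction and your ``$k$ even'' argument are correct: showing that $\phi^{-1}(w)$ must be an original vertex $v'$ with $d_G(v')=k$, and that the shadow $u$ of $v'$ must be sent to an original vertex of $N_G(v)$ (because $\phi(u)\not\sim \phi(v')=w$ rules out a shadow image), so that $k+1=d(u)=2d_G(\phi(u))$ forces $k$ odd --- this is a clean variant of the paper's case split. The difficulty is that this leaves $k$ odd as the main (indeed, the only remaining) case, and there you have only a plan, as you yourself concede. Moreover, the ingredients you propose to assemble into a contradiction are provably insufficient: the multiset identity, the independence of $N_G(v)$, the absence of a dominating vertex, the parity constraints, and the edge count $F=2E_T$ can all hold simultaneously. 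For example, with $n=6$ and $k=3$, let $v$ be adjacent to the independent set $S=\{s_1,s_2,s_3\}$ and let $T=\{t_1,t_2\}$ carry the edges $s_1t_2$, $s_3t_2$, $t_1t_2$: both sides of your multiset identity equal $\{4,2,4\}$, $F=2=2E_T$, every vertex outside $N_G(v)$ has odd degree, and no vertex dominates. So no amount of ``assembling'' these necessary conditions will yield a contradiction; you must extract more structure from the automorphism itself.

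The missing step is in fact one move away from what you already have, and it is exactly the paper's Case (II) argument. Having shown $\phi(u)=v_a$ is an original vertex in $N_G(v)$, use $N(u)=N_G(v')\cup\{w\}$ to get $N(v_a)=\phi(N_G(v'))\cup\{v\}$; every $x\in N_G(v')$ satisfies $\phi(x)\sim\phi(v')=w$, so $\phi(x)$ is a shadow, and hence $v$ is the only original vertex in $N(v_a)$. Since the neighbors of an original vertex come in original/shadow pairs, $N_G(v_a)=\{v\}$ and $d(v_a)=2$, contradicting $d(v_a)=d(u)=k+1\ge 3$. (The paper runs this neighborhood-pairing argument on the shadow of $\phi(w)$ and handles the shadow-image subcase separately via a dominating-vertex contradiction; your $\phi^{-1}$ device eliminates that subcase up front, which is a nice economy, but the original-image subcase still needs the pairing argument rather than degree-sum or edge-counting identities.)
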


\begin{proof}
Let $G$ be a graph with $n \ge 3$ vertices and suppose by way of contradiction that $G$ has an automorphism $\phi$ with $\phi(w) = v$ for some original vertex $v$. We will show that there is no possible image for the shadow of $v$ under $\phi$.

Label the vertices of $G$ so that $v=v_n$ and the neighbors of $v$ in $G$ are $\{v_1,\ldots, v_k\}$ with $k < n$. The shadow vertex of $v$ will then be denoted $u = u_n$. 

Since $u$ is a shadow vertex, it is adjacent to $w$ by construction, and so $\phi(u)$ is adjacent to $\phi(w)=v$. Thus, $\phi(u) \in N(v) = \{v_1,\ldots,v_k,u_1,\ldots,u_k\}$. We consider two cases: $\phi(u) = u_i$ for some $1 \le i \le k$ or $\phi(u) = v_i$ for some $1 \le i \le k$ and find a contradiction in each.

\medskip
\noindent\textbf{Case\,(I):} Suppose that $\phi(u) = u_i$ for some $1 \le i \le k$. 

We will show that this implies $G$ has a dominating vertex, contradicting Lemma~\ref{lem:nodomvertex}.

Since $d(w)=n$ and automorphisms preserve degree, $d(\phi(w))=d(v) = n$ as well. By construction of $\mu(G)$, we have $d(v) = 2d_G(v) = 2k$. Thus, $n=2k$.

 Since $d_G(v)=k$, by construction $d(u) = k+1$. Further, since automorphisms preserve degree, $d(\phi(u))=d(u_i) = k+1$ as well. Since $u_i$ is the shadow vertex of $v_i$, by construction we also get $d(v_i)=2k$. Also, by our choice of $i$, $v_i\in N(v)$. Thus, by properties of the automorphism $\phi^{-1}$, we have $\phi^{-1}(v_i) \in N(\phi^{-1}(v)) = N(w)$. Hence, $w$ has a neighbor of degree $2k$.
 
 Since the only neighbors of the root are shadow vertices, there is some $j$ such that $d(u_j)=2k$. By construction, this means that $d(v_j) = 2(2k{-}1) = 2n{-}2$ and so $d_G(v_j) = n{-}1$. This implies $v_j$ is dominating in $G$, contradicting Lemma~\ref{lem:nodomvertex}. Thus, $\phi(u) \ne u_i$ for any $1\leq i \leq k$.

\medskip
\noindent\textbf{Case\,(II):} Suppose that $\phi(u) = v_i$ for some $1 \le i \le k$. 

We will show that $\phi(v) = w$ and use this to argue that $d(u) = 2$, a contradiction since $d(u) = k+1$ and $k\geq 2$.

Since $N(u) = \{v_1, \dots, v_k, w\}$, 
 we have that \[ N(\phi(u)) = N(v_i) = \{\phi(v_1), \dots, \phi(v_k), \phi(w)=v\}.\]
Since $v_i$ is an original vertex, its neighbors come in original-shadow vertex pairs. In particular, since $v$ is  neighbor of $v_i$, its shadow $u$ must also be a neighbor of $v_i$, which implies that 
$u \in \{\phi(v_1), \dots, \phi(v_k)\} \subset  N(\phi(v)).$
If $u \in N(\phi(v))$, then reciprocally, $\phi(v) \in N(u) = \{v_1, \dots, v_k, w\}$. However, since $v$ is not adjacent to $w$, $\phi(v)$ is not adjacent to $\phi(w) = v$, which implies  $\phi(v) \notin \{v_1, \dots, v_k\}$. Thus $\phi(v) = w.$

Recall that $N(u) \setminus \{w\} = \{v_1,\ldots,v_k\}$ is a set of $k$ vertices all adjacent to $v$. By the properties of automorphisms, it follows that $N(\phi(u)) \setminus \{\phi(w)\} = N(v_i)\setminus \{v\}$ is a set of $k$ vertices all adjacent to $\phi(v)=w$. Therefore $N(v_i) \setminus \{v\}$ must consist entirely of shadow vertices.

Now, by construction, $N(v_i)$ is equally split between original vertices and their corresponding shadow vertices. Since $v$ is the only original vertex in $N(v_i)$, we can conclude that $N(v_i) = \{u,v\}$, so $d(v_i)=2$. Since $\phi(u)=v_i$ by assumption, $d(u) = 2$ as well. This gives our desired contradiction.\end{proof}

Lemma~\ref{lem:phiwisv} leaves only two possibilities for automorphisms that do not fix the root. One is that $|G| < 3$. For example, $\mu(K_2) = C_{5}$, which is vertex-transitive.

The other way an automorphism might not fix $w$ is to map it to a shadow vertex. For example, Figure~\ref{fig:muK13x2} shows $\mu(K_{1,3})$ with original vertices in black, shadow vertices in orange, and the root in white. The vertical reflectional symmetry of this drawing induces an automorphism that moves the root to a shadow vertex. Such an automorphism exists for every star graph $K_{1,m}$ with $m \geq 0$. We show in Lemma~\ref{lem:phiwisu} that star graphs are the only graphs in which the root is not fixed by every automorphism of $\mu(G)$.

Before our next lemma, we introduce the following definition and notation. 

\begin{defn}
Given a vertex $v$ in a graph, let the \emph{neighborhood degree multiset of $v$}, denoted $D_v$, be $\{d(u): u\in N(v)\}$.
\end{defn}

Properties of automorphisms guarantee for every vertex $v$ and automorphism $\phi$, that $D_v = D_{\phi(v)}$. We use this fact in the proof of Lemma~\ref{lem:phiwisu} and in the proofs in Section~\ref{sec:gen}.

\begin{lemma} \label{lem:phiwisu}
 If there is an automorphism $\phi$ of $\mu(G)$ that takes the root $w$ to a shadow vertex, then $G = K_{1,m}$ for some $m \geq 0$. Additionally, if $|G| \neq 2$, then $\phi(w)$ is the shadow vertex of the unique vertex of maximum degree in $G$.
 \end{lemma}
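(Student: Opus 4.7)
The plan is to let $\phi(w) = u_j$ for some shadow vertex, and then use degree/multiset arguments to force $G$ into the shape of a star.

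First I would use the degree constraint. Since $d(w) = n$ and automorphisms preserve degree, $d(u_j) = n$. But by construction $d(u_j) = d_G(v_j) + 1$, so $d_G(v_j) = n - 1$ and hence $v_j$ is a dominating vertex of $G$. This already determines that $v_j$ is adjacent in $G$ to every other original vertex.

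Next I would bring in the neighborhood degree multiset. Since $\phi(w) = u_j$, we have $D_w = D_{u_j}$. Computing each side using the facts about $\mu(G)$:
\[
D_w = \{d(u_i) : 1 \le i \le n\} = \{d_G(v_i) + 1 : 1 \le i \le n\},
\]
\[
D_{u_j} = \{d(w)\} \cup \{d(v_i) : v_i \in N_G(v_j)\} = \{n\} \cup \{2 d_G(v_i) : i \ne j\}.
\]
The term $d_G(v_j) + 1 = n$ appears on the left matching the lone $n$ on the right, so canceling it yields the multiset equality
\[
\{d_G(v_i) + 1 : i \ne j\} = \{2 d_G(v_i) : i \ne j\}.
\]
Summing both sides gives $\sum_{i \ne j}(d_G(v_i) + 1) = 2 \sum_{i \ne j} d_G(v_i)$, hence $\sum_{i \ne j} d_G(v_i) = n - 1$. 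Combined with the handshake lemma $2|E(G)| = d_G(v_j) + \sum_{i \ne j} d_G(v_i) = 2(n-1)$, this shows $|E(G)| = n - 1$.

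Now the star conclusion drops out: $v_j$ alone already contributes $n-1$ edges (one to every other vertex), so there are no further edges in $G$, meaning every $v_i$ with $i \ne j$ has degree exactly $1$. Thus $G = K_{1,n-1}$, i.e., $G = K_{1,m}$ for $m = n - 1 \ge 0$. For the second assertion, if $|G| \ne 2$, then either $n = 1$ (so $v_j$ is trivially the unique vertex of maximum degree) or $n \ge 3$, in which case $d_G(v_j) = n - 1 \ge 2$ while every other vertex has degree $1$, again making $v_j$ the unique vertex of maximum degree; since $\phi(w) = u_j$, this is the shadow of the unique vertex of maximum degree, as claimed.

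The only delicate point is matching up the two multisets correctly — in particular noticing that $n$ occurs on the left precisely from the index $j$ — after which the computation is mechanical. I don't anticipate a serious obstacle beyond being careful with the multiset arithmetic and the small cases $n = 1, 2$.
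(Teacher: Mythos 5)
Your proposal is correct. The skeleton matches the paper's proof: both deduce from $d(\phi(w))=d(w)=n$ that $d_G(v_j)=n-1$ (equivalently, that $N(u_j)=\{w\}\cup\{v_i : i\ne j\}$), both equate the neighborhood degree multisets $D_w$ and $D_{u_j}$, and both cancel the common value $n$ to arrive at the identity $\{d_G(v_i)+1 : i\ne j\}=\{2d_G(v_i): i\ne j\}$. Where you genuinely diverge is in how that identity is resolved. The paper argues extremally: it takes $d_{\min}$, the least degree exceeding $1$ among the $v_i$, observes that $d_{\min}+1$ and $2d_{\min}$ are each the smallest entry greater than $2$ on their respective sides, and derives the contradiction $d_{\min}+1=2d_{\min}$. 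You instead sum the two multisets to get $\sum_{i\ne j}d_G(v_i)=n-1$ and combine this with the fact that $v_j$ dominates (so each of the $n-1$ remaining vertices has degree at least $1$) to force every other degree to equal $1$; your detour through the handshake lemma and the edge count $|E(G)|=n-1$ is correct but unnecessary, since ``$n-1$ positive integers summing to $n-1$'' already finishes it. Your summation argument is arguably slicker here, and it also transfers to the reduced identity $\{d_G(v_i)+1\}=\{2d_G(v_i)\}$ reused in Lemma~\ref{lem:phiwisv'}, since there too each $v_i$ in play is a neighbor of some vertex and hence has degree at least $1$. Your treatment of the small cases $n=1$ and $n=2$ and of the uniqueness of the maximum-degree vertex is also fine.
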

 
\begin{proof}
Let $\phi$ be an automorphism of $\mu(G)$ such that $\phi(w)$ is a shadow vertex. Let $|G|=n$ and label the vertices of $\mu(G)$ so that $\phi(w) = u_n$.

If $n = 1$, then $G = K_{1,0}$, and $\mu(G)$ has independent vertex $v_1$ together with a $K_2$ consisting of shadow vertex $u_1$ and root $w$. Clearly $\phi(w)$ must be $u_1$, the only other nonisolated vertex in $\mu(G)$.

Suppose $n > 1$. Since $\phi(w) = u_n$, by properties of automorphisms, $D_{w}=D_{\phi(w)} = D_{u_n}$. We show this equality guarantees $G = K_{1,n{-}1}$.

By construction of the Mycielskian, we have $N(w) = \{u_1,\ldots, u_n\}$ and $d(u_i)=d_G(v_i){+}1$. Thus \[D_{w} = \{d_G(v_1){+}1,\ldots,d_G(v_n){+}1\}.\] 

By construction and properties of graph automorphisms $n = d(w) = d(u_n)$. Then, since $u_n$ is not adjacent to $v_n$, it must be that $N(u_n) = \{v_1,\ldots,v_{n{-}1},w\}$. Since $d(v_i)=2d_G(v_i)$, we see that \[D_{u_n} = \{2d_G(v_1),\ldots, 2d_G(v_{n{-}1}),d(w)\}.\]

 With $D_{w} = D_{u_n}$ we have 

\[\{d_G(v_1){+}1,\ldots,d_G(v_n){+}1\} = \{2d_G(v_1),\ldots, 2d_G(v_{n{-}1}),d(w)\}.\]

Recall $d(w) = n = d(u_n)$ and by construction $d(u_n) = d_G(v_n)+1$, so removing $d(w)=d_G(v_n)+1$ yields

\begin{equation}\label{eqn:D_Ns} \{d_G(v_1){+}1,\ldots,d_G(v_{n{-}1}){+}1\} = \{ 2d_G(v_1),\ldots, 2d_G(v_{n{-}1})\}. \end{equation}

We will now show this is impossible when $G\neq K_{1,m}$ for $m \ge 1$. We have already that $d_G(v_n) = n{-}1$, so suppose that for some value of $i$ with $1\leq i\leq n{-}1$, we have $d_G(v_i)>1$. Define
\[ d_{\min}=\min_{1 \leq i \leq n{-}1} \{ d_G(v_i) : d_G(v_i)>1 \}. \] 
Let $j\in \{1,\ldots , n{-}1\}$ be such that $d_G(v_j)=d_{\min}>1$.

Then in Equation~\ref{eqn:D_Ns} on the left hand side $d_G(v_j)+1$ is the smallest value greater than $2$, and on the right hand side, $2d_G(v_j)$ is the smallest value greater than $2$. Thus $d_G(v_j)+1 = 2d_G(v_j)$. However, this can only hold if $d_G(v_j) = 1$, a contradiction of $d_G(v_j)>1$.

Therefore, we must have $d_G(v_i)=1$ for $1 \leq i \leq n{-}1$ and $d_G(v_n)=n{-}1$. Thus, $G = K_{1,n{-}1}$ for some $n\geq 2$. 
Furthermore, if $|G|\geq 3$ then $v_n$ is the unique vertex of maximum degree in $G$, and $\phi(w)$ is its shadow.\end{proof}

\section{Generalized Mycielskian Graphs} \label{sec:gen}

In this section, we define and examine generalized Mycielskian graphs and their automorphisms. The organizational structure and results mirror those in Section~\ref{sec:autos}, although the proofs have some differences. 

The generalized Mycielskian of $G$, also known as a cone over $G$, was introduced by Stiebitz~\cite{stiebitz1985beitrage} in 1985 (cited in \cite{ct2001}) and independently by Van Ngoc~\cite{van1987graph} in 1987 (cited in~\cite{van19954}). For a fixed $t\geq 1$ and graph $G$ with vertices $\{v_1,\ldots,v_n\}$, the \emph{generalized Mycielskian of $G$}, written $\gm$, has vertices 
\[ \{u^0_1,\ldots,u^0_n,u^1_1,\ldots,u^1_n,\dots,u^t_1,\ldots,u^t_n,w\}. \]

For each edge $v_i v_j$ in $G$, the graph $\gm$ has edges $u_i^0 u_j^0$ and $u^s_i,u^{s+1}_j$, $u^s_j,u^{s+1}_i$, for $0\leq s \leq t-1$. In addition, $\gm$ has edges $u^t_i w$ for $1 \leq i \leq n$. Thus, $\gm$ has an isomorphic copy of $G$ on vertices $\{u^0_1,\ldots,u^0_n\}$, so we say $u_i^0=v_i$ for $1 \le i \le n$. We say that vertex $u_i^s$ is at level $s$; the vertices at level $0$ are called \emph{original vertices}, and the vertices at level $s\geq 1$ are called \emph{shadow vertices (at level $s$)}. The vertex $w$ is still referred to as the root, but note $w$ is only adjacent to the shadow vertices at level $t$.

In Figure~\ref{fig:GenMyExamples}, we illustrate both the traditional Mycielskian ($t=1$) and generalized Mycielskian with $t = 2$, for each of $K_2$ and $K_3$. Since $\mu_1(G) = \mu(G)$, when $t=1$ we drop the subscript for ease of notation. As before, when subscripts are omitted in degree or neighborhood notation, we are referring to degree or neighborhood in $\gm$.

\begin{figure}[htb]
\centering
 \begin{tikzpicture}[scale=.7]
\draw[black!100,line width=1.5pt] (-1,0) -- (1,0); 
\draw[fill=black!100,line width=1] (-1,0) circle (.2);
\draw[fill=black!100,line width=1] (1,0) circle (.2);
\begin{scope}[shift={(6,0)},rotate=180]
\draw[black!100,line width=1.5pt] (-1,0) -- (1,0); 
\draw[black!100,line width=1.5pt] (-1,0) -- (1,-1);
\draw[black!100,line width=1.5pt] (-1,-1) -- (1,0);
\draw[black!100,line width=1.5pt] (-1,-1) -- (0,-1.5);
\draw[black!100,line width=1.5pt] (1,-1) -- (0,-1.5);
\draw[fill=black!100,line width=1] (-1,0) circle (.2);
\draw[fill=black!100,line width=1] (1,0) circle (.2);
\draw[fill=orange!100,line width=1] (-1,-1) circle (.2);
\draw[fill=orange!100,line width=1] (1,-1) circle (.2);
\draw[fill=white!100,line width=1] (0,-1.5) circle (.2);
\end{scope}
\begin{scope}[shift={(12,0)},rotate=180]
\draw[black!100,line width=1.5pt] (-1,0) -- (1,0); 
\draw[black!100,line width=1.5pt] (-1,0) -- (1,-1);
\draw[black!100,line width=1.5pt] (-1,-1) -- (1,0);
\draw[black!100,line width=1.5pt] (-1,-1) -- (1,-2);
\draw[black!100,line width=1.5pt] (-1,-2) -- (1,-1);
\draw[fill=black!100,line width=1] (-1,0) circle (.2);
\draw[black!100,line width=1.5pt] (-1,-2) -- (0,-2.5);
\draw[black!100,line width=1.5pt] (1,-2) -- (0,-2.5);
\draw[fill=black!100,line width=1] (1,0) circle (.2);
\draw[fill=orange!100,line width=1] (-1,-1) circle (.2);
\draw[fill=orange!100,line width=1] (1,-1) circle (.2);
\draw[fill=yellow!100,line width=1] (-1,-2) circle (.2);
\draw[fill=yellow!100,line width=1] (1,-2) circle (.2);
\draw[fill=white!100,line width=1] (0,-2.5) circle (.2);
\end{scope}

\begin{scope}[shift={(0,-4)}]
\draw[black!100,line width=1.5pt] (-30:2.5) -- (-150:2.5);
\draw[black!100,line width=1.5pt] (90:2.5) -- (-150:2.5);
\draw[black!100,line width=1.5pt] (-30:2.5) -- (90:2.5);
 \draw[fill=black!100,line width=1] (-30:2.5) circle (.2);
 \draw[fill=black!100,line width=1] (90:2.5) circle (.2);
 \draw[fill=black!100,line width=1] (-150:2.5) circle (.2);
\end{scope}

\begin{scope}[shift={(6,-4)}]
\draw[black!100,line width=1.5pt] (-30:2.5) -- (-150:2.5);
\draw[black!100,line width=1.5pt] (90:2.5) -- (-150:2.5);
\draw[black!100,line width=1.5pt] (-30:2.5) -- (90:2.5);
\draw[black!100,line width=1.5pt] (-30:2.5) -- (-150:1.25);
\draw[black!100,line width=1.5pt] (90:2.5) -- (-150:1.25);
\draw[black!100,line width=1.5pt] (-30:2.5) -- (90:1.25);
\draw[black!100,line width=1.5pt] (-30:1.25) -- (-150:2.5);
\draw[black!100,line width=1.5pt] (90:1.25) -- (-150:2.5);
\draw[black!100,line width=1.5pt] (-30:1.25) -- (90:2.5);
\draw[black!100,line width=1.5pt] (-30:1.25) -- (0,0);
\draw[black!100,line width=1.5pt] (90:1.25) -- (0,0);
\draw[black!100,line width=1.5pt] (-150:1.25) -- (0,0);
 \draw[fill=black!100,line width=1] (-30:2.5) circle (.2);
 \draw[fill=black!100,line width=1] (90:2.5) circle (.2);
 \draw[fill=black!100,line width=1] (-150:2.5) circle (.2);
 \draw[fill=orange!100,line width=1] (-30:1.25) circle (.2);
 \draw[fill=orange!100,line width=1] (90:1.25) circle (.2);
 \draw[fill=orange!100,line width=1] (-150:1.25) circle (.2);
 \draw[fill=white!100,line width=1] (0,0) circle (.2);
\end{scope}

\begin{scope}[shift={(12,-4)}]
\draw[black!100,line width=1.5pt] (-30:2.5) -- (-150:2.5);
\draw[black!100,line width=1.5pt] (90:2.5) -- (-150:2.5);
\draw[black!100,line width=1.5pt] (-30:2.5) -- (90:2.5);
\draw[black!100,line width=1pt] (-30:2.5) -- (-150:1.66);
\draw[black!100,line width=1pt] (90:2.5) -- (-150:1.66);
\draw[black!100,line width=1pt] (-30:2.5) -- (90:1.66);
\draw[black!100,line width=1pt] (-30:1.66) -- (-150:2.5);
\draw[black!100,line width=1pt] (90:1.66) -- (-150:2.5);
\draw[black!100,line width=1pt] (-30:1.66) -- (90:2.5);
\draw[black!100,line width=1pt] (-30:.83) -- (-150:1.66);
\draw[black!100,line width=1pt] (90:.83) -- (-150:1.66);
\draw[black!100,line width=1pt] (-30:.83) -- (90:1.66);
\draw[black!100,line width=1pt] (-30:1.66) -- (-150:.83);
\draw[black!100,line width=1pt] (90:1.66) -- (-150:.83);
\draw[black!100,line width=1pt] (-30:1.66) -- (90:.83);
\draw[black!100,line width=1pt] (-30:.83) -- (0,0);
\draw[black!100,line width=1pt] (90:.83) -- (0,0);
\draw[black!100,line width=1pt] (-150:.83) -- (0,0);
 \draw[fill=black!100,line width=1] (-30:2.5) circle (.175);
 \draw[fill=black!100,line width=1] (90:2.5) circle (.175);
 \draw[fill=black!100,line width=1] (-150:2.5) circle (.175);
 \draw[fill=yellow!100,line width=1] (-30:.83) circle (.175);
 \draw[fill=yellow!100,line width=1] (90:.83) circle (.175);
 \draw[fill=yellow!100,line width=1] (-150:.83) circle (.175);
  \draw[fill=orange!100,line width=1] (-30:1.66) circle (.175);
 \draw[fill=orange!100,line width=1] (90:1.66) circle (.175);
 \draw[fill=orange!100,line width=1] (-150:1.66) circle (.175);
 \draw[fill=white!100,line width=1] (0,0) circle (.175);
\end{scope}
\end{tikzpicture}
\caption{Top: $K_2$, $\mu(K_2)$ and $\mu_2(K_2)$, drawn with vertical levels with the root on the top. Bottom: $K_3$, $\mu(K_3)$ and $\mu_2(K_3)$, drawn with concentric levels with the root in the middle.}
\label{fig:GenMyExamples}
\end{figure}
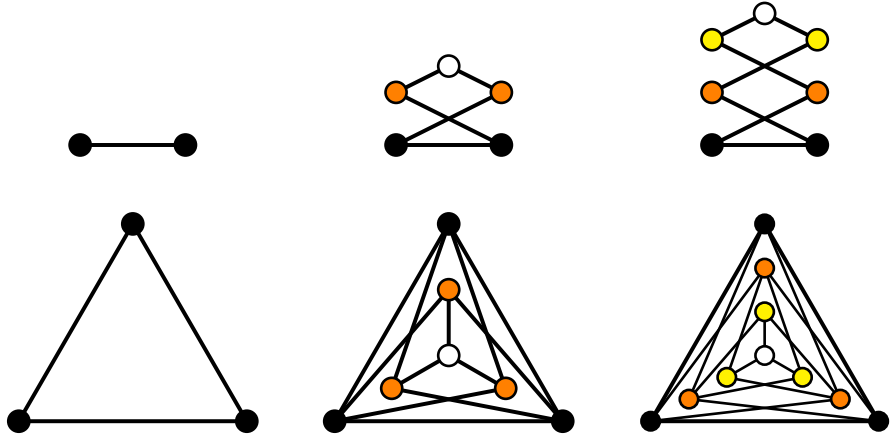

\medskip\noindent{\bf \boldmath Facts about $\gm$:}  Let $|G|=n$, $t\geq 1$, and $d_G(v_i)=k$. The generalized Mycielski construction gives us the following: $|\gm|=(t+1)n+1$; $d(w) = n$; for $0\leq s \leq t-1$, $d(u^s_i) = 2k$; $d(u^t_i)= k+1$; for $1\leq s\leq t$, the set of shadow vertices at level $s$ is independent.

The results in Section \ref{sec:autos} for the traditional Mycielskian of a graph correspond closely to many of the results for the generalized Mycielskian. To indicate as much, we have labeled appropriate extended results in the same manner as in Section \ref{sec:autos}, only with a prime added. The exception is Lemma~\ref{lem:disconnected}, which is only needed for the generalized Mycielskian. As in the case for $\mu(G)$, to prove results about automorphisms of $\gm$, we consider cases based on the image of the root. The following lemma shows that if $G$ is disconnected, then every automorphism of $\gm$ fixes the root.

\begin{lemma}\label{lem:disconnected} If $G$ is a disconnected graph and $\phi$ is an automorphism of $\gm$, then $\phi$ maps the root $w$ to itself. \end{lemma}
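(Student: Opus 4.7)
The plan is to rule out $\phi(w)=x$ for $x\ne w$ by comparing, for $\mu_t(G)$, two invariants preserved by automorphisms: the number of isolated vertices and the number of connected components under deletion of a single vertex. Let $I$ be the set of non-trivial components of $G$ and $J$ the set of isolated vertices of $G$, so $|I|+|J|\ge 2$. I would first describe the baseline structure of $\mu_t(G)$: for each $v_j\in J$, the construction makes $u_j^0,\ldots,u_j^{t-1}$ isolated in $\mu_t(G)$ and $u_j^t$ a pendant whose unique neighbor is $w$; for each non-trivial $G_{i'}\in I$, the vertex set $\{u_j^s:v_j\in V(G_{i'}),\,0\le s\le t\}$ induces a connected subgraph of $\mu_t(G)$ attached to $w$ through its level-$t$ vertices. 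Consequently $\mu_t(G)-w$ has exactly $|I|+(t+1)|J|$ connected components, of which $(t+1)|J|$ are isolated vertices.

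The case $|J|\ge 1$ is settled by counting isolated vertices. The key observation is that the \emph{only} degree-$1$ vertices of $\mu_t(G)$ are the pendants $u_j^t$ for $v_j\in J$, and each has unique neighbor $w$. Therefore removing any $x\ne w$ creates no new isolated vertex, so $\mu_t(G)-x$ has at most $t|J|$ isolated vertices (exactly $t|J|$ if $x$ lies in the main component, one fewer if $x$ is already isolated). Since automorphisms preserve the isolated-vertex count and $(t+1)|J|>t|J|$ when $|J|\ge 1$, no such $\phi$ with $\phi(w)=x$ can exist.

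This leaves $|J|=0$, where $G$ is a disjoint union of $|I|\ge 2$ non-trivial components, $\mu_t(G)$ is connected, and $\mu_t(G)-w$ has $|I|\ge 2$ components. I would reach a contradiction by showing $\mu_t(G)-x$ remains connected for every $x\ne w$. The crux, and the main obstacle, is the following sub-claim: for a connected graph $G_{i'}$ on $\ge 2$ vertices and any $x\in V(\mu_t(G_{i'}))\setminus\{w\}$, every connected component of $\mu_t(G_{i'})-w-x$ contains some level-$t$ vertex. I would prove this by a climbing argument, ascending through the Mycielski ladder from any $u_j^r$ along an upward neighbor $u_k^{r+1}$ with $v_k\in N_{G_{i'}}(v_j)$, and rerouting via a downward detour $u_j^r\to u_k^{r-1}\to u_l^r$ whenever $x$ is the unique upward neighbor of $u_j^r$; such a detour succeeds unless $G_{i'}=K_2$, and that degenerate case is handled directly since $\mu_t(K_2)-w$ is a path whose endpoints are both level-$t$ vertices, so any single-vertex deletion leaves each fragment with a level-$t$ endpoint. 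With the sub-claim in hand, every fragment of every non-trivial piece in $\mu_t(G)-x$ reattaches to $w$ through a level-$t$ edge, while the pieces not meeting $x$ are untouched; hence $\mu_t(G)-x$ is connected, contradicting the $|I|\ge 2$ components of $\mu_t(G)-w$ and completing the proof.
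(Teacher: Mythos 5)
Your overall strategy is sound and genuinely different from the paper's: the paper proves directly that $w$ is the \emph{unique} cut-vertex of $\gm$ (exhibiting, for each non-root vertex $u_i^s$ with $v_i$ non-isolated, a cycle through $u_i^s$, $w$, and each neighbor of $u_i^s$), whereas you compare deletion invariants. Your first case is correct and clean: since every degree-$1$ vertex of $\gm$ has $w$ as its unique neighbor, deleting any $x\neq w$ creates no new isolated vertices, so the isolated-vertex count of $\gm - x$ falls strictly short of that of $\gm - w$ whenever $G$ has an isolated vertex. Your reduction of the remaining case to ``no $x\neq w$ is a cut-vertex'' is also fine, and the sub-claim you isolate (every component of $\mu_t(G_{i'})-w-x$ contains a level-$t$ vertex) is true and does what you need.

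The gap is in your proof of that sub-claim: the assertion that the downward detour ``succeeds unless $G_{i'}=K_2$'' is false. The detour fails for every star component $K_{1,m}$ with $m\ge 2$ when $x$ is a shadow of the center. Concretely, take $G_{i'}=K_{1,2}$ with center $c$ and leaves $a,b$, and $x=u_c^{r+1}$ with $1\le r\le t-1$. Then $u_a^r$ has unique upward neighbor $x$; your detour $u_a^r\to u_c^{r-1}\to u_b^r$ lands on $u_b^r$, whose unique upward neighbor is again $x$, and iterating the detour only shuttles among leaf shadows at level $r$ and center shadows below. The climb genuinely stalls: no vertex of the form $u_\ell^r$ ($\ell$ a leaf) or $u_c^{r-1}$ can step above level $r$. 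The sub-claim still holds, but establishing it in this situation requires a different idea --- for instance, descend all the way to level $0$, use that $G_{i'}$ is connected there to switch to the center column, and climb the opposite-parity ladder $u_c^0,u_\ell^1,u_c^2,\dots$, which avoids $x$ because $x$ sits in the center column at level $r+1$ while that ladder puts a leaf there (with the parities reversed if $r$ is odd). Alternatively, you could replace the climbing argument wholesale by the paper's observation that every neighbor of $u_i^s$ lies on a cycle through $u_i^s$ and $w$ of the form $v_i,u_j^1,u_i^2,\dots,w,\dots,u_j^2,u_i^1,v_j,v_i$, which disposes of all non-root vertices at once. As written, your proof is incomplete for disconnected graphs having a star component on at least three vertices, e.g.\ $G=K_{1,2}+K_2$.
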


\begin{proof}
We show here that under the given hypotheses, $w$ is the only vertex of $\gm$ whose removal increases the number of connected components. That is, $w$ is the only cut-vertex in $\gm$. Since every graph automorphism must preserve properties of connectedness, every automorphism of $\gm$ must, therefore, map $w$ to itself. 

First, consider the deletion of $w$. Let $v_i$ and $v_j$ be in distinct components of $G$. By the Mycielski construction, $u_i^t$ and $u_j^t$ are both adjacent to $w$ in $\gm$. However, $u_i^t, w, u_j^t$ is the only path between $u_i^t$ and $u_j^t$ and so in $\gm\setminus\{w\}$, we have $u_i^t$ and $u_j^t$ in distinct components. This shows that $\gm\setminus\{w\}$ has more components than $\gm$ and so $w$ is a cut-vertex in $\gm$.

We now consider the deletion of other vertices in $\gm$, all of the form $u_i^s$ for $0 \leq s \leq t$, such that $u_i^0 = v_i$ is either an isolated or a nonisolated vertex in $G$. We show their deletion from $\gm$ does not increase the number of components.

Consider the vertex $u_i^s$ for $0\leq s \leq t$ such that $u_i^0 $ is a nonisolated vertex in $G$. For each neighbor $v_j$ of $v_i$ in $G$, the following cycle exists in $\gm$: $v_i,u^1_j,u^2_i,\dots,w,\ldots,u_j^2,u_i^1,v_j,v_i$. Observe that this cycle contains $u_i^s$ and, further, that every neighbor of $u_i^s$ is contained in a cycle of this form. Thus, removing $u_i^s$ from $\gm$ does not disconnect the graph. Hence, $\gm\setminus\{u_i^s\}$ has the same number of components as $\gm$ and $u_i^s$ is not a cut-vertex.

Finally, consider the vertex $u_i^s$ for $0\leq s \leq t$ such that $u_i^0$ is an isolated vertex in $G$. If $s \neq t$, then $u_i^s$ is isolated in $\gm$ and so $u_i^s$ cannot be a cut-vertex. If $s = t$, then $u_i^t$ has $w$ as its only neighbor and is also not a cut-vertex.

It follows that $w$ is the only cut-vertex in $\gm$ and so every automorphism of $\gm$ must fix $w$.\end{proof}

Knowing that any automorphism of a disconnected graph fixes the root allows us in many cases to only consider connected graphs $G$. The following lemma also provides us with a useful structural property. In particular, if $G$ is a graph with at least three vertices and $\gm$ has an automorphism mapping $w$ to an original vertex or a shadow vertex not at level $t$, then $G$ does not have a dominating vertex.

\begin{manual2}[lemma]{\ref{lem:nodomvertex}\thmprime}
\label{lem:GMnodomvertex}
Let $G$ be a graph with $|G|\geq 3$ and 
$t\geq 1$. Let $\phi$ be an automorphism of $\gm$. If $\phi(w)$ is a vertex at level $s$ for $0\leq s\leq t{-}1$, then $G$ does not have a dominating vertex.
\end{manual2}

\begin{proof}
Let $|G|=n\geq 3$. Assume that $\phi$ is an automorphism of $\gm$ with $\phi(w)$ either an original vertex or a shadow vertex at level $s$ for some $1\leq s\leq t{-}1$. Label the vertices of $G$ so that $\phi(w) = u_n^s$ and so that $N_G(v_n) = \{v_1,\dots, v_k\}$, where $k =d_G(v_n)$.
If $s = 0$, then $u_n^s = v_n$.

By properties of automorphisms and the generalized Mycielskian construction, $d(\phi(w)) = d(u_n^s) =2k$ and $d(\phi(w))= d(w) = n$. Thus, $n=2k$. With $n\geq 3$ it follows that $k\ge 2$. Since $d_G(v_n)=k = \frac{d(u^s_n)}{2}$ by construction, $d_G(v_n) = \frac{n}{2}$. Further, since $k\ge 2$ and $n\ge 3$, we get $\frac{n}{2}\neq n{-}1$, so $v_n$ is not a dominating vertex in $G$. It follows that any dominating vertex in $G$ must be in $N(v_n)$. 

Suppose there exists $j\in \{1,\ldots, k\}$ so that $v_j$ is a dominating vertex of $G$. Then $d_G(v_j)=n{-}1$, so $d(v_j) = 2(n{-}1)$. If $s\ge 1$, by construction $d(u_j^{s{-}1}) = 2(n{-}1)$. Also, since $v_j\in N(v_n)$, if $s \ge 1$, we have $u_j^{s{-}1}\in N(u_n^s)$, and if $s=0$, we have $v_j\in N(u_n^0)$. Thus, for any $0 \le s \le t{-}1$, we have a  degree $2n{-}2$ vertex adjacent to $\phi(w)=u_n^s$. By properties of automorphisms, this implies that $w$ has a neighbor of degree $2n{-}2$. However, by construction, all neighbors of $w$ have degree at most $n$. Since $n<2n{-}2$ for $n\ge 3$, we achieve a contradiction. 

Hence, if $\phi(w)$ is a vertex at level $s$ for some $0\leq s\leq t{-}1$, then $G$ does not have a dominating vertex.\end{proof}

We will now show that for $|G| \geq 3$, any automorphism of $\gm$ that does not fix the root $w$, must map $w$ to a shadow vertex at level $t$. Note that Lemma~\ref{lem:phiwisv} addresses the case that $t=1$. 

\begin{manual2}[lemma]{\ref{lem:phiwisv}\thmprime}\label{lem:phiwisv'} 
Let $G$ be a graph with $|G|  = n \ge 3$ and $t >1$. Then no automorphism of $\gm$ maps the root $w$  to $u^s_i$, for any $1\leq i \leq n$, $0 \leq s\leq t{-}1$.
\end{manual2}

\begin{proof}
Let $G$ be a graph with $|G|=n \ge 3$. By Lemma \ref{lem:disconnected}, if $G$ is disconnected, then every automorphism  $\phi$ of $\gm$ satisfies $\phi(w)=w$. Thus, we need only consider the case when $G$ is connected. 

Suppose there is an automorphism $\phi$ of $\gm$ that maps the root $w$ to $u_i^s$ for some $1\leq i\leq n$, $0\leq s\leq t{-}1$. Label the vertices so that $\phi(w) = u_n^s$ and $N_G(v_n) = \{v_1,\ldots,v_k\}$, meaning $d_G(v_n) = k$. 

We split the remainder of the proof into cases: $s=t{-}1$ and $0\leq s\leq t{-}2$.

\medskip

\noindent{\bf Case\,(I):} Suppose that $\phi(w) = u_n^{t{-}1}$. By construction and since automorphisms preserve degrees, $2k = d(u_n^{t-1}) = d(w) = n$. We will show that there is no possible image for $u_n^{t{-}1}$ under $\phi$.

Since $u_n^{t{-}1}$ is distance 2 from $w$, by properties of automorphisms, $\phi(u_n^{t-1})$ is distance two from $\phi(w)=u_n^{t-1}$. To see the choices for $\phi(u_n^{t-1})$, we need only look at the endpoints of paths of length two from $\phi(w) = u_n^{t{-}1}$. Recall that shadow vertices at levels $s\in \{1,\ldots , t\}$ are independent sets. Thus, unless $t=2$, a path of length 2 from $u_n^{t{-}1}$ must change levels at each vertex. Thus such paths can only take one of the following forms: $u_n^{t{-}1} u_i^t w$, $u_n^{t{-}1} u_i^{t} u_j^{t{-}1}$, $u_n^{t{-}1} u_i^{t{-}2} u_j^{t{-}1}$, $u_n^{t{-}1} u_i^{t{-}2} u_j^{t{-}3}$, or $u_n^1 v_i v_j$, where the latter two paths require $t \ge 3$ and $t=2$, respectively.
Thus, we consider three subcases: Case\,(Ia): $\phi(u_n^{t{-}1})=w$; Case\,(Ib): $\phi(u_n^{t{-}1})=u^{t{-}1}_j$ for $1 \le j \le n{-}1$; Case\,(Ic): either $t\ge 3$ and $\phi(u_n^{t{-}1})=u^{t{-}3}_j$ or $t=2$ and $\phi(u_n^{t{-}1})=v_j$ for $1 \le j \le n$. 

\smallskip

\noindent{\textbf{Case\,(Ia):}} Suppose that $\phi(u_n^{t{-}1})=w$. First, suppose that $t\geq 3$. Since $u_n^{t{-}3}$ has distance 2 from $u_n^{t{-}1}$, and since automorphisms preserve distances, we must have that $\phi(u_n^{t-3})$ has distance 2 from $\phi(u_n^{t-1})=w$. Thus, since $\phi(w) = u_n^{t{-}1}$, we have $\phi(u_n^{t-3}) =u_j^{t{-}1}$, for some $1 \leq j \leq n{-}1$. We will show that in fact $\phi(u_n^{t-3}) = u_j^{t-1}$ for some $1\leq j \leq k$. We then will show this implies that $G$ has a dominating vertex, contradicting Lemma~\ref{lem:GMnodomvertex}. If $t=2$, then $u_n^0$ is distance 2 from $u_n^{t{-}1}$, and the following argument still holds, replacing $u_n^{t{-}3}$ with $u_n^0$.

First, since $N_G(v_n) = \{v_1, \dots, v_n\}$, by construction the common neighbors of $u_n^{t-1}$ and $u_n^{t-3}$ are $u_1^{t-2},\ldots,u_k^{t-2}$.

Thus, these $k$ vertices must be mapped to common neighbors of $\phi(u_n^{t-1}) = w$ and $\phi(u_n^{t-3}) = u_j^{t-1}$. However, the common neighbors of $w$ and $u_j^{t-1}$ are the neighbors of $u_j^{t-1}$ at level $t$. Thus, $u_j^{t-1}$ has exactly $k$ neighbors at level $t$. 
Since these are disjoint from the neighbors of $u_n^{t{-}1}$ at level $t$ and $n = 2k$, they must be $\{ u_{k+1}^{t},\dots,u_n^t \}$. Hence, by construction, $N_G(v_j) =\{v_{k+1}, \ldots, v_n\}$. In particular, $v_j\in N_G(v_n)$ and so $1 \le j \le k$.

We have already shown that $d(u_j^{t{-}1})=2k$, so $d(u_j^{t{-}2}) = 2k$ as well. Further, since $v_j\in N(v_n)$ we see that $u_j^{t{-}2} \in N(u_n^{t{-}1}) = N(\phi(w))$. Therefore, by properties of automorphisms, $w$ also has a neighbor of degree $2k$, say $u_i^t$. By construction this implies $d_G(v_i) = 2k{-}1 = n{-}1$, so  $v_i$ is dominating vertex in $G$. 
This contradicts Lemma~\ref{lem:GMnodomvertex}.

Thus $\phi(u_n^{t{-}1})\ne w$.

\smallskip

\noindent{\textbf{Case\,(Ib):} }
Suppose that $\phi(u_n^{t{-}1}) =u^{t{-}1}_j$, for some $1 \leq j \leq n{-}1$. Note, that since $\phi(w) = u_n^{t{-}1}$, we cannot have $j = n$. We will show that $d_G(v_n)=k$ must be both even and odd, a contradiction. First we will show that $\phi(u_n^t)=u_{i}^{t{-}2}$, for some $1 \leq i \leq k$. 

Since $u_n^t$ is adjacent to $w$, $\phi(u_n^t)$ is adjacent to $\phi(w)=u_n^{t{-}1}$. Hence, $\phi(u_n^t)$ must be a neighbor of $u_n^{t{-}1}$ at level $t$ or at level $t{-}2$. So, $\phi(u_n^t) \in \{u_1^t, \dots, u_k^t, u_1^{t{-}2},\dots u_k^{t{-}2}\}$. We next show that $\phi(u_n^t) = u^t_i$ for any $1 \le i \le k$ leads to a contradiction so that $\phi(u_n^t)=u_{i}^{t{-}2}$ for some $1 \leq i \leq k$.

By construction $d(u_n^t) = k+1$, so as automorphisms preserve degrees, if $\phi(u_n^t) = u^{t}_i$, then $d(u_i^t)=k+1$. Then, by construction, $d(u_i^{t{-}2})=2k$. Moreover, with $1 \le i \le k$, we have $u^{t{-}2}_i\in N(u_n^{t{-}1}) = N(\phi(w))$. So $w$ must also be adjacent to a vertex of degree $2k$, say $u_j^t$. By construction, since $u_j^t$ is a top-level shadow vertex, $d(u_j^t) = d_G(v_j)+1$, so $d_G(v_j) =2k{-}1 = n{-}1$. Thus, $v_j$ is a dominating vertex in $G$ contradicting Lemma~\ref{lem:GMnodomvertex}. Hence, $\phi(u_n^t)\ne u_i^t$ for any $1 \leq i \leq k$ and therefore, $\phi(u_n^t) = u_i^{t{-}2}$ for some $1 \leq i \leq k$.

Then, since $d_G(v_n) = k$, we have $k+1=d(u_n^t)= d(\phi(u_n^t))=d(u^{t-2}_i)$. However, since $u_i^{t-2}$ is not a top-level shadow vertex, by construction we also have that $d(u^{t-2}_i)=2d_G(v_i)$. With $k+1 = 2d_G(v_i)$,  $k$ must be odd.

Since $u_n^{t{-}1}$ and $w$ have $k$ common neighbors, namely $u^t_1, \ldots, u_k^t$, we see that $\phi(u_n^{t-1})=u_j^{t-1}$ and $\phi(w)=u_n^{t-1}$ must have $k$ common neighbors as well. Since $u_j^{t{-}1}$ and $u_n^{t{-}1}$ are at the same level, by construction, their common neighbors must be split evenly between vertices at level $t$ and vertices at level $t{-}2$. This implies that $k$ is even, a contradiction with our earlier conclusion that $k$ is odd. 

Thus $\phi(u_n^{t-1}) \ne u^{t-1}_j$ for any $1 \leq j \leq n-1$.

\smallskip

\noindent \textbf{Case\,(Ic):} 
Suppose that either $t\ge 3$ and $\phi(u_n^{t{-}1})=u^{t{-}3}_j$ or $t=2$ and $\phi(u_n^{t{-}1})=v_j=u_j^0$ for some $1 \le j \le n$. Say $\phi(u_n^{t{-}1}) = u_j^r$ with $r \in \{t{-}3,0\}$. Note that if $t=3$, then $r=0=t-3$. 

Since automorphisms preserve degrees, the neighborhood degree multisets $D_{u_n^{t{-}1}}$ and $D_{u_j^{r}}$ are equal. This will yield a contradiction similar to the one in Lemma~\ref{lem:phiwisu}.

By construction half the neighbors of $u_n^{t{-}1}$ are at level $t$ with degree $d_G(v_i)+1$ for each $1\le i \le k$ and half are at level $t{-}2$ with degree $2d_G(v_i)$ for each $1 \le i \le k$. Thus the neighborhood degree multiset of $u_n^{t{-}1}$ is
\begin{equation}
D_{u_n^{t{-}1}}= \{ d_G(v_1){+}1,\dots,d_G(v_k){+}1,2d_G(v_1),\dots,2d_G(v_k)\}. \label{eqn:Dunt-1}
\end{equation} 

By construction, if $t \ge 4$, then   $r = t{-}3 > 0$ and so a vertex at level $r$ has half its neighbors at level $t-4$ and the other half at level $t-2$. If $t=2$ or $t=3$, then $r=0$ and so a vertex at level $r$ has half its neighbors at level $0$ and the other half at level $1$. Thus the neighbors of $\phi(u_n^{t-1}) = u^{r}_j$ are not at level $t$, and therefore have degree $2d_G(v_i)$ for some $1 \le i \le n$. 

To be more precise about $N(u_j^r)$, let $d_G(v_j) = \ell$ and write $N_G(v_j)=\{v_{i_1},\ldots, v_{i_\ell}\}$ for appropriate indices $i_j$. By construction, if $t\geq 4$, we have $N(u_j^{r}) = N(u_j^{t-3})= \{u^{t-4}_{i_1},\ldots, u^{t-4}_{i_\ell}, u^{t-2}_{i_1},\ldots, u^{t-2}_{i_\ell} \}$ and therefore 
\begin{equation}
D_{u_j^{r}} = \{2d_G(v_{i_1}),\ldots, 2d_G(v_{i_\ell}),2d_G(v_{i_1}),\ldots, 2d_G(v_{i_\ell})\}. \label{eqn:Dujr}
\end{equation} If $t=3$ or $t=2$ so that $r=0$, levels $t{-}4$ and $t{-}2$ above get replaced by levels 0 and 1 in $N(u_j^r)$. This yields the same degree multiset as in Equation~\ref{eqn:Dujr}. 

Thus, equality of the multisets $D_{u_n^{t-1}}$ and $D_{u_j^r}$ gives equality of the sets in Equations~\ref{eqn:Dunt-1} and~\ref{eqn:Dujr}. We can conclude that $\ell = k$. Furthermore, we have $2d_G(v_1) = 2d_G(v_{i_j})$ for some $i_j\in \{i_1,\ldots, i_\ell\}$. Proceeding inductively, we can reindex $\{1,\ldots,\ell\}$ if necessary so that $d_G(v_j) = d_G(v_{i_j})$. Thus, dropping these identical elements from each set, and using the equality gained from reindexing, we get:

\[ \{ d_G(v_1){+}1,\dots,d_G(v_k){+}1\} = \{2d_G(v_1),\ldots, 2d_G(v_k)\}. \] 

Using the same argument used in the proof of Lemma~\ref{lem:phiwisu}, we see that this is only possible if all $k$ neighbors of $v_n$ have degree 1 in $G$. However, if all neighbors of $v_n$ in $G$ have degree 1, then our assumption that $G$ is connected requires that $G$ be a star graph and that $v$ be dominating in $G$. This contradicts Lemma~\ref{lem:GMnodomvertex}. 

We conclude then that if $t \ge 3$, then $\phi(u_n^{t{-}1}) \ne u^{t{-}3}_j$ and if $t=2$, then $\phi(u_n^{t{-}1}) \ne u_j^0$, for any $1\leq j\leq n$.

\smallskip

This finishes Case\,(I), so that $\phi(w) \neq u_n^{t-1}$. 

\medskip

\noindent{\bf Case\,(II):} Suppose that $\phi(w)=u_n^s$ for some $0\leq s\leq t{-}2$.

Since $d_G(v)=k$, and $s < t$, we have $d(u_n^s)=2k$. Hence $\phi(w) = u_n^s$ gives $d(u_n^s) = d(w) = n$. We will show the equality $D_{w}=D_{\phi(w)} = D_{u^s_n}$ required by properties of automorphisms leads to a contradiction. 

By construction, $N(w) = \{u^t_1,\ldots, u^t_n\}$ and $d(u^t_i)=d_G(v_i)+1$. Thus \[D_{w} = \{d_G(v_1){+}1,\ldots,d_G(v_n){+}1\}.\] 

If $1 \le s \le t{-}2$, then $N(u^s_n)=\{u^{s{-}1}_1,\ldots, u^{s{-}1}_{k},u^{s{+}1}_1,\ldots, u^{s{+}1}_{k}\}$, and since $d(u^{s{+}1}_i) = d(u^{s{-}1}_i)=2d_G(v_i)$, we see that 
\[ D_{u^s_n} = \{2d_G(v_1), 2d_G(v_1),\ldots, 2d_G(v_k), 2d_G(v_{k})\}.\]
If $s=0$, level $s{-}1$ above gets replaced by level 0 in $N(u_n^s)$. This gives the same neighborhood degree multiset for $D_{u^s_n}$.

Thus equality of $D_{w}$ and $D_{u^s_n}$ gives
\begin{equation} \{d_G(v_1){+}1,\ldots,d_G(v_{n}){+}1\} = \{2d_G(v_1), 2d_G(v_1),\ldots, 2d_G(v_k), 2d_G(v_{k})\}. \label{eqn:NwNus}\end{equation}

If there exists an $i$ in $1 \le i \le k$ with $d_G(v_k) > 1$, let
\[ d_{\min}=\min_{1 \leq i \leq k} \{ d_G(v_i) : d_G(v_i)>1 \}. \] 
Let $j$ in be such that $d_G(v_j) = d_{\min}$. Then as $d_G(v_j)+1$ appears on the left hand side of Equation~\ref{eqn:NwNus}, there is a $j'$ with $1 \le j' \le k$ such that $d_G(v_j)+1=2d_G(v_{j'})$. Because $d_G(v_j) = d_{\min}>1$, we find $d_G(v_{j'})>1$. Then, by selection of $d_{\min}$ and that $1 \le j' \le k$, we have $d_G(v_j) \le d_G(v_{j'})$. However, the equality $d_G(v_j)+1=2d_G(v_{j'})$ and $d_G(v_j) > 1$ imply that $d_G(v_{j'}) < d_G(v_j)$. This contradiction lets us conclude $d_G(v_i)=1$ for all $1 \le i \le k$. 

But, then all neighbors of $v_n$ in $G$ have degree 1. Thus, our assumption that $G$ is connected requires that $G$ be a star graph with $v_n$ be dominating in $G$. This contradicts Lemma~\ref{lem:GMnodomvertex}. 

Thus $\phi(w)\ne u_n^s$ for any $0\leq s\leq t-2$.\end{proof}

Lemma~\ref{lem:phiwisv'} leaves two possibilities for automorphisms that do not fix the root. One is that $|G| < 3$. Of these, $K_1$ is fully addressed  by Lemma~\ref{lem:phiwisu'}. Additionally, $K_1+K_1$, where $+$ indicates disjoint union, is a disconnected graph, which is addressed by Lemma~\ref{lem:disconnected}. Finally, we have $K_2 = K_{1,1}$. Here, we have $\mu_t(K_2) = C_{2t+3}$, a vertex-transitive graph. As we will see in Lemma~\ref{lem:phiwisu'}, $K_2$ is the only star graph with automorphisms not mapping $w$ to a top-level shadow vertex. 

The other possibility is that $G$ has an automorphism where $w$ is mapped to a top-level shadow vertex. Lemma~\ref{lem:phiwisu'} shows that this only occurs when $G$ is a star graph. 

\begin{manual2}[lemma]{\ref{lem:phiwisu}\thmprime}\label{lem:phiwisu'} 
 If there is an automorphism $\phi$ of $\gm$ that takes the root $w$ to a shadow vertex at level $t$, 
 then $G = K_{1,m}$ for some $m \geq 0$. Additionally, if $|G| \ne 2$, then $\phi(w)$ is the shadow vertex at level $t$ of the unique vertex of maximum degree in $G$. \end{manual2}

\begin{proof}
Let $|G|=n$ and let $\phi$ be an automorphism of $\gm$ such that $\phi(w)$ is a shadow vertex at level $t$. Label the vertices so that $\phi(w) = u_n^t$. Then $u_n^t$ is a shadow of $v_n$.

Suppose $n=1$. Then $G = K_{1,0}$ and $\gm$ is a set of isolated vertices, $\{u_1^0, u_1^1, \dots, u_1^{t{-}1}\}$, together with a $K_2$ consisting of shadow vertex $u_1^t$ and root $w$. Clearly $\phi(w)$ must be $u_1^t$, the only other nonisolated vertex of $\gm$.

Now, suppose $n > 1$. Since $u_n^t = \phi(w)$, we have $D_{u_n^t} = D_{w}$. As in Lemma~\ref{lem:phiwisu}, this allows us to conclude $G = K_{1,n{-}1}$. 

By construction and properties of automorphisms, $n = d(w) = d(u_n^t) = d_G(v_n){+}1$. Thus, $d_G(v_n) = n-1$, so that $N_G(v_n) = \{v_1,\ldots,v_{n{-}1}\}$. Hence, $N(u_n^t) = \{ u_1^{t{-}1},\dots, u_{n{-}1}^{t{-}1},w\}$ and
\[ D_{u_n^t} = \{d(u_1^{t{-}1}),\dots, d(u_{n{-}1}^{t{-}1}),d(w)\} = \{ 2d_G(v_1),\dots, 2d_G(v_{n-1}), d(w)\}.\]

On the other hand, by construction $N(w) = \{u_1^t,\ldots,u_n^t\}$. Thus, 
\[ D_{w}=\{d(u_1^{t}),\dots, d(u_{n}^{t})\} = \{d_G(v_1){+}1,\dots, d_G(v_n){+}1\}.\]

In $D_{w}$, we have $d(w) = n$ and in $D_{u_n^t}$ we have $d(v_n)+1 = n$, so after equating the two and removing $d(w) = d_G(v_n)+1$, we get: 

\[\{d_G(v_1){+}1,\dots, d_G(v_{n-1}){+}1\} = \{2d_G(v_1),\dots, 2d_G(v_{n-1})\}.\]

This is the same equation as Equation~\ref{eqn:D_Ns}. Hence, as in the proof of Lemma~\ref{lem:phiwisu}, we can conclude that $G=K_{1,n{-}1}$. Then $v_n$ is the unique vertex of maximum degree in $G$ and $\phi(w)$ is its shadow at level $t$.\end{proof}

\section{Distinguishing Mycielskian Graphs} \label{sec:dist}
In Sections~\ref{sec:autos} and~\ref{sec:gen} we studied the action of an automorphism on $\gm$.
For convenience in the proof of Theorem~\ref{thm:A&Sconj}, we combine Lemmas~\ref{lem:phiwisv},~\ref{lem:phiwisv'},~\ref{lem:phiwisu},~\ref{lem:phiwisu'}, and~\ref{lem:disconnected}, with the earlier observation about $K_2$, into a single lemma.

\begin{lemma} \label{lem:master}
Let $G$ be a graph and let $t \geq 1$. Let $\phi$ be an automorphism of $\gm$.
\begin{itemize}

  \item If $G = K_{1,1} = K_2$, then $\gm = C_{2t+3}$, and $\phi(w)$ can be any vertex. 
 \item If $G=K_{1,m}$ for $m \ne 1$ then $\phi(w) \in \{w,u^t\}$, where $u^t$ is the top-level shadow vertex of the vertex of degree $m$ in $K_{1,m}$.
  \item If $G \neq K_{1,m}$ for any $m$, then $\phi(w) = w$.\end{itemize}
\end{lemma}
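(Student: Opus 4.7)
The plan is to synthesize the previously established results via a case analysis on $|G|$ and on the level at which $\phi(w)$ might sit. Every vertex of $\mu_t(G)$ is either the root $w$, an original vertex, or a shadow vertex at some level $1 \le s \le t$, so to catalogue the possible images of $w$, I only need to decide which of these classes $\phi(w)$ may belong to for each shape of $G$.

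First I will dispose of the small cases. If $G = K_1$, then $\mu_t(G)$ consists of isolated vertices $u_1^0, \ldots, u_1^{t-1}$ together with the single edge $u_1^t w$; the only non-isolated vertex adjacent to $w$ is $u_1^t$, so any automorphism sends $w$ either to itself or to $u_1^t$, matching the second bullet (with $u_1^t$ the top-level shadow of the unique, hence maximum-degree, vertex of $K_{1,0}$). If $G = K_2$, the remark preceding the lemma observes that $\mu_t(G) = C_{2t+3}$, which is vertex-transitive, so $\phi(w)$ can be any vertex. If $G = K_1 + K_1$, then $G$ is disconnected and Lemma~\ref{lem:disconnected} yields $\phi(w) = w$; since $K_1 + K_1$ is not a star, this falls into the third bullet.

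For $|G| \ge 3$, I will combine the level-by-level restrictions. Lemma~\ref{lem:phiwisv} when $t = 1$ and Lemma~\ref{lem:phiwisv'} when $t \ge 2$ together rule out $\phi(w)$ being an original vertex or a shadow at any level $s$ with $0 \le s \le t-1$, leaving $\phi(w) \in \{w\} \cup \{u_1^t, \ldots, u_n^t\}$. If $\phi(w)$ is a top-level shadow, Lemma~\ref{lem:phiwisu} (for $t = 1$) and Lemma~\ref{lem:phiwisu'} (for $t \ge 2$) force $G = K_{1,m}$ for some $m$ and identify $\phi(w)$ with the top-level shadow of the unique maximum-degree vertex, yielding the second bullet. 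Otherwise $\phi(w) = w$: if $G$ is not a star this is the third bullet, and if $G = K_{1,m}$ with $m \ge 2$ this is the $w$ option in the second bullet.

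The argument is essentially bookkeeping rather than new mathematics; the one point requiring care is to stitch together the $t = 1$ statements (Lemmas~\ref{lem:phiwisv} and~\ref{lem:phiwisu}) with their $t \ge 2$ analogues (Lemmas~\ref{lem:phiwisv'} and~\ref{lem:phiwisu'}) so that every $t \ge 1$ is covered uniformly, and to verify that the corner cases $|G| \le 2$ land in the correct bullet. I expect no substantive obstacle beyond this organization.
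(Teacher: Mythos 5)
Your proposal is correct and matches the paper's approach: the paper gives no separate proof of Lemma~\ref{lem:master}, presenting it precisely as the combination of Lemmas~\ref{lem:phiwisv}, \ref{lem:phiwisv'}, \ref{lem:phiwisu}, \ref{lem:phiwisu'}, and~\ref{lem:disconnected} together with the observation that $\mu_t(K_2)=C_{2t+3}$. Your bookkeeping (splitting $t=1$ from $t\ge 2$, and checking that $K_1$, $K_2$, and $K_1+K_1$ land in the right bullets) is exactly the intended assembly.
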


We are now ready to state and prove our main result which says that with few exceptions, $\dist(\gm) \leq \dist(G)$. This proves Conjecture~\ref{conj:A&S} in~\cite{AS2018}.

\begin{theorem}\label{thm:A&Sconj}
Let $G$ be a graph with $\ell \ge 0$ isolated vertices and let $t\geq 1$. \begin{itemize}
  \item If $G = K_1$, then $\dist(\mu(G)) = 2$, while for $t > 1$, $\dist(\gm) = t$, exceeding $\dist(G) = 1$ for all $t$.
  
  \item If $G = K_2$, then $\dist(\mu(G)) = 3$, while for $t>1$, $\dist(\gm) = 2$, exceeding $\dist(G)=2$ only for $t=1$. 
  
  \item If $t\ell > \dist(G)$, then $\dist(\gm) = t \ell$, exceeding $\dist(G)$.

  \item Otherwise, if $G \neq K_1, K_2$ and $t\ell \le \dist(G)$, then $\dist(\gm) \le  \dist(G)$. 
\end{itemize}
\end{theorem}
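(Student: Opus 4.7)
My plan is to dispatch the four bullets in order, with Case~4 carrying the main content.

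The two degenerate cases reduce to explicit identifications of $\gm$. When $G=K_1$, $\gm$ is the disjoint union of the edge $u_1^tw$ with $t$ isolated vertices $u_1^0,\ldots,u_1^{t-1}$, whose distinguishing number is $\max(2,t)$. When $G=K_2$, $\gm=C_{2t+3}$, whose distinguishing number is $3$ for $t=1$ (the $5$-cycle) and $2$ for $t\ge 2$. For the third bullet, the $\ell$ isolated vertices of $G$ give rise to exactly $t\ell$ isolated vertices in $\gm$, namely $\{u_i^s:1\le i\le \ell,\ 0\le s\le t{-}1\}$, and every permutation of these is induced by an automorphism; this yields $\dist(\gm)\ge t\ell$, while the matching upper bound is produced by the construction described below for Case~4, run with $t\ell$ colors rather than $d$ (which leaves strictly more freedom on the non-isolated part, where $d$ colors already suffice).

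For Case~4 I fix a $d$-distinguishing coloring $c$ of $G$, with $d=\dist(G)$, and note that the $\ell$ isolated vertices $v_1,\ldots,v_\ell$ of $G$ already carry $\ell$ distinct $c$-colors. I define $c'\colon V(\gm)\to\{1,\ldots,d\}$ by setting $c'(u_i^s)=c(v_i)$ on every shadow of a non-isolated $v_i$ at every level and also at $s\in\{0,t\}$ for each isolated $v_i$, and filling in $(t-1)\ell$ additional colors for the remaining shadows $u_i^s$ with $1\le i\le \ell$ and $1\le s\le t{-}1$ so that the $t\ell$ colors appearing on the isolated vertices of $\gm$ are pairwise distinct in $\{1,\ldots,d\}$; this is possible since $t\ell\le d$. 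I color $w$ freely unless $G=K_{1,m}$ with $m\ge 2$, in which case I force $c'(w)\ne c(v_n)$ where $v_n$ is the central vertex, which is arrangeable since $d=m\ge 2$.

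To verify $c'$ distinguishes $\gm$, take any $\phi\in\aut(\gm)$ preserving $c'$. The $t\ell$ isolated vertices of $\gm$ carry distinct colors and are fixed pointwise. By Lemma~\ref{lem:master}, $\phi(w)$ is either $w$ or, in the star case, the top-level shadow $u_n^t$ of the central vertex; the latter is ruled out by the constraint $c'(w)\ne c'(u_n^t)$, so $\phi(w)=w$. The distance in $\gm$ from $w$ to $u_j^s$ with $v_j$ non-isolated equals $t-s+1$, so $\phi$ preserves each level setwise on the non-isolated portion; the $\ell$ pendant shadows $u_i^t$ with $v_i$ isolated have degree~$1$ and are separated from the rest of level~$t$, then pinned by their distinct colors $c(v_i)$. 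Hence $\phi$ restricted to level~$0$ is a $c$-preserving automorphism of $G$ and therefore trivial. A short induction on $s$ lifts this to every level: if level $s{-}1$ is fixed pointwise and $\phi(u_j^s)=u_{j'}^s$ with $v_j$ non-isolated, equality of their level-$(s{-}1)$ neighborhoods forces $N_G(v_j)=N_G(v_{j'})$, so $v_j$ and $v_{j'}$ are $G$-twins; twins carry distinct $c$-colors while $\phi$ equates $c(v_j)=c(v_{j'})$, so $j=j'$. The step I expect to be most delicate is the targeted coloring of $w$ in the star case: Lemma~\ref{lem:master} reduces the problem to killing a single additional candidate image for $w$, so a one-color constraint suffices, but it must interact cleanly with the rest of the construction; once it is in place, the distance-from-$w$ argument and the twin-color induction run without incident.
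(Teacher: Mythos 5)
Your proposal is correct and follows the same overall strategy as the paper: reduce the possible images of the root via Lemma~\ref{lem:master}, extend a distinguishing coloring of $G$ identically up the levels, pin the isolated vertices and the degree-one neighbors of $w$ with distinct colors, and induct level-by-level using the fact that equal lower-level neighborhoods force $G$-twins, which carry distinct colors. The one genuine divergence is the star case $G=K_{1,m}$, $m\ge 2$: the paper abandons the general construction there and builds a bespoke $m$-coloring (color $i$ on the entire column of leaf $i$, color $2$ on the center column, color $1$ on $w$), verifying it from scratch with degree and distance arguments; you instead fold the star case into the unified construction by imposing the single constraint $c'(w)\ne c(v_n)$, which by Lemma~\ref{lem:master} kills the only non-fixing candidate image of $w$, after which the same distance-and-twin induction applies. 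Your version is more economical and avoids a second verification; the paper's version has the side benefit of exhibiting an explicit optimal coloring of $\mu_t(K_{1,m})$ and showing $\dist(\mu_t(K_{1,m}))=m$ directly. Your reduction of the third bullet's upper bound to the Case~4 construction run with $t\ell$ colors matches what the paper does (it simply writes that coloring out), and your direct computation $\dist(\mu_t(K_1))=\max(2,t)$ cleanly covers the $K_1$ instance of the third bullet that would otherwise need the root-color constraint there as well.
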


Note that the last case covers nearly all graphs. For example, it covers all connected graphs with at least three vertices.

\begin{proof}
If $G = K_1$ then $\dist(G) = 1$ and $\dist(\mu(G)) = \dist(K_1+K_2)= 2$. When $t > 1$, since $G$ has $\ell=1$ isolated vertices, we have $t=t\ell>\dist(G)$, and so this case is handled below. 

If $G = K_2$, then $\dist(G)=2$. As already observed,  $\gm = C_{2t+3}$. Since $\dist(C_5)=3$ and $\dist(C_n) = 2$ when $n\geq 6$, the result holds. 

Let $|G|=n$ and $G$ have $0 \le \ell \le n$ isolated vertices. 

If $\ell > 0$, label the graph so that the isolated vertices are $v_1,\ldots,v_\ell$. By the generalized Mycielskian construction, $\gm$ has a collection of $t \ell$ mutual twins $T = \bigcup_{i=0}^{t{-}1} \{u_1^i,\ldots,u_\ell^i\}$ consisting of isolated vertices and a set of $\ell$ mutual twins $U= \{u_1^t,\ldots,u_\ell^t\}$ consisting of degree-1 neighbors of $w$. For each $0 \le s \le t$, let $R_s$ be the remaining vertices at level $s$, so that $R_s=\{u_{\ell+1}^s,\ldots,u_n^s\}$. Note if $\ell = n$, then $R_s$ is empty for each $0 \le s \le t$. Similarly, if $\ell = 0$, let $T$ and $U$ be empty. 

Suppose $t\ell>\dist(G)$. If $\ell=0$, then $t\ell =0 < \dist(G)$, so we may assume $1 \le \ell \le n$. 
Since mutual twins must receive distinct colors in a distinguishing coloring, $\dist(\gm)\geq |T| = t\ell$. We will now describe a $t\ell$-distinguishing coloring. 

First, give each vertex in $T$ a distinct color. For the vertices in $U$, give $u^t_i$ the color of $u^0_i=v_i$ for $1\leq i \leq \ell$. Next, if $\ell < n$, use at most $\dist(G)<t\ell$ colors on $\{v_{\ell+1}, \dots, v_{n}\}=R_0$ so that the induced coloring on $G$ is distinguishing and also color each shadow vertex $u_j^s$ the same color as $v_j$ for $1\leq s \leq t$ and $\ell<j\leq n$. Finally, give $w$ any of the $t\ell$ colors, other than the color on $v_1$ and $u_1^t$.

Now, let $\phi$ be an automorphism of $\gm$ that respects this coloring. If $G=K_1$, then $w$ and $u_1^t$ having different colors means $\phi$ fixes $w$. Otherwise, the presence of isolated vertices means that $G$ is not $K_{1,m}$ for any $m\geq 1$ and so, by Lemma~\ref{lem:master}, $\phi$ fixes $w$. Every vertex of $T$ is fixed since these are the only vertices of degree $0$ and each has a distinct color. Similarly, the vertices in $U$ are the only vertices adjacent to $w$ with degree $1$, and each vertex of $U$ has a different color, so $\phi$ fixes each vertex in $U$. 

If $R_s$ is nonempty, then by construction, for $0 \le s \le t$, the distance between $w$ and vertices in $R_s$ is $t-s+1$. Since automorphisms preserve distances, the sets $R_0, \ldots,R_t$ are preserved by $\phi$. Since the coloring of $G$ is distinguishing, $\phi$ fixes each vertex in $G$, and therefore in the set $\{v_{\ell+1},\dots,v_n\} = R_0$. Suppose now that $R_{s-1}$ is fixed pointwise. Since for $\ell+1 \le i \le n$, we have $\phi(u_i^s) \in R_s$, let $\phi(u_i^s) = u_j^s$ for $i \neq j$. Since automorphisms preserve adjacency and $R_{s-1}$ is fixed pointwise, we have $N(u_i^s) \cap R_{s-1} = N(\phi(u_i^s)) \cap R_{s-1} = N(u_j^s) \cap R_{s-1}$. By construction, this can only occur if $v_i$ and $v_j$ are twins. However, since the coloring restricted to $G$ is distinguishing, $v_i$ and $v_j$ have different colors. Thus, in our coloring $u_i^s$ and $u_j^s$ received different colors, a contradiction. This shows that $R_s$ must be fixed pointwise as well.

Thus, $\phi$ fixes every vertex of $\gm$ and so we have $t\ell$-distinguishing coloring of $\gm$. This shows that when $t\ell > \dist(G),$ we have $\dist(\gm) = t\ell$.

\medskip

For the remainder of the proof, we assume $G\neq K_1, K_2$ and $t\ell\leq \dist(G)$. We consider two cases based on whether the automorphism fixes the root. 

\smallskip 

Suppose first that $\gm$ has an automorphism that does not fix $w$. Since $G \neq K_1, K_2$, by Lemma~\ref{lem:master}, $G=K_{1,m}$ for some $m\geq 2$. Hence, $\dist(G) = m$. Let $v_{m+1}$ be the unique vertex of degree $m$ in $G$. By the structure of $K_{1,m}$ and $\mu_t(K_{1,m})$, we see that 
\begin{itemize} 
	\item $d(u^s_{m+1}) = 2m$ for all $0\leq s \leq t-1$ and $d(u^t_{m+1}) = m+1$; 
	\item vertices $v_1,\ldots,v_m$ are mutually twin in $\gm$ since each has neighborhood $\{v_{m+1} , u^1_{m+1}\}$;
	\item for each $1 \le s \le t$, vertices $u^s_1,\ldots, u^s_m$ are mutually twin in $\gm$ with shared neighborhood $\{u_{m+1}^{s{-}1},u_{m+1}^{s+1}\}$ when $s \neq t$ and $\{u_{m+1}^{t{-}1},w\}$ when $s=t$.
	\end{itemize} 

Note that since $v_1,\ldots, v_m$ are mutually twin, each needs a distinct color in a distinguishing coloring. Therefore, $\dist(\gm) \geq m$. We claim that, in fact, $\dist(\gm) = m$.

Consider the following $m$-coloring of $\gm$: for $1 \le i \le m$ assign color $i$ to
 $u^s_i$, for $0 \leq s \leq t$. Assign color $1$ to $w$ and color $2$ to $u^s_{m+1}$, for $0 \leq s \leq t$. Suppose that $\phi$ is an automorphism of $\gm$ that preserves these color classes. Let $C_i$ be the set of vertices with color $i$.
 
 We have $C_1=\{u_1^0, u^1_1,\dots, u^t_1,w\}$. Since each  vertex in $C_1\setminus\{w\}$ has degree 2, while $w$ has degree $m+1> 2$, we have $w$ is fixed by $\phi$. Furthermore, since the distance from $w$ to $u_1^s$ is $t-s+1$, these unique distances from a vertex fixed by $\phi$ guarantee that $C_1$ is fixed pointwise by $\phi$.

We have $C_2=\{u^0_2, \ldots, u^t_2, u^0_{m+1},\ldots, u^t_{m+1} \}$. The vertices in $\{u^0_2,\dots, u^t_2 \}$ have degree $2$, while the vertices in $\{u^0_{m+1},\dots, u^{t}_{m+1}\}$ have degree $2m$ or $m+1$, each of which is strictly greater than $2$. Therefore, $\phi$ fixes each setwise. Furthermore, as before, within each of these subsets, the vertices have distinct distances from the fixed vertex $w$. Thus, $C_2$ is also fixed pointwise by $\phi$.

For each $3\leq i \leq m$, we have $C_i=\{ v_i,u^1_i,\dots, u^t_i\}$. Again, the vertices of $C_i$ have distinct distances from the fixed vertex $w$, and so $C_i$ is fixed pointwise by $\phi$.

Thus, this is an $m$-distinguishing coloring of $\mu_t(K_{1,m})$ when $m \ge 2$ so that $\dist(\mu_t(K_{1,m})) = \dist(K_{1,m})$ for $m \ge 2$. In particular, when $G \neq K_1, K_2$, $t \ell \le \dist (G)$, and $G$ has an automorphism that does not fix $w$, we have $\dist(\gm) \le \dist(G)$. 

Finally, suppose that every automorphism of $\gm$ fixes $w$. Recall that we have assumed $G\ne K_1,K_2$ and that $t\ell \leq \dist(G)$. Let $\dist(G)=k$ and fix a $k$-distinguishing coloring of $G$. We extend this coloring to a $k$-distinguishing coloring of $\gm$.

First, color all original vertices in $\gm$ with the $k$-distinguishing coloring of $G$. To be distinguishing, any twin vertices in $G$ must receive different colors. In particular, if $\ell \ge 2$, the isolated vertices of $G$ have distinct colors. As before, extend the coloring to the rest of the isolated vertices in $T$, giving each a distinct color. Since $|T| = t \ell \le \dist(G)$, we have enough colors for this step. For vertices that are not isolated, color each shadow vertex $u^s_j$ the same color as $u_j^0=v_j$, for $1\leq s \leq t$, $1 \leq j \leq n$. Finally, give $w$ any of the $k$ colors. We claim this is an $k$-distinguishing coloring of $\gm$.

Let $\phi$ be an automorphism of $\gm$ that respects this coloring. Since all vertices of $T$ received different colors, $\phi$ fixes all isolated vertices. For $0 \le s \le t$, the sets $R_s= \{u_{\ell+1}^s,\ldots,u_n^s\}$ are nonempty. As before, the distance between vertices in $R_s$ and $w$ is a function of $s$. Since $w$ is fixed,  these sets are preserved setwise by $\phi$. Also as before, our coloring of $R_0$ comes from a distinguishing coloring of $G$, so $R_0$ is fixed pointwise. An induction argument can again be used to show that this guarantees each set $R_s$ is fixed pointwise, so that we have a distinguishing coloring of $\gm$. 

Thus, $\dist(\gm)\leq k = \dist(G)$ when $w$ is fixed and $t\ell \le \dist(G)$.\end{proof}

The following corollary is immediate from Theorem \ref{thm:A&Sconj} since if $G$ has $\ell$ isolated vertices then $\dist(G)\geq t\ell$ when $t=1$. The corollary proves and exceeds the conjecture by Alikhani and Soltani.
\begin{cor} For all graphs $G$ with $G\neq K_1, K_2$,
 $\dist(\mu(G))\leq \dist(G)$.
\end{cor}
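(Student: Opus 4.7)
The plan is to observe that this corollary is an immediate specialization of Theorem~\ref{thm:A&Sconj} with $t=1$, together with the elementary fact that isolated vertices of $G$ form a set of mutual twins. Specifically, if $G$ has $\ell$ isolated vertices, then these $\ell$ vertices share the empty neighborhood, so any distinguishing coloring must assign them pairwise distinct colors, yielding $\dist(G) \ge \ell$.

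Given this, I would simply check the four bullets of Theorem~\ref{thm:A&Sconj} with $t=1$. The first two bullets (covering $G=K_1$ and $G=K_2$) are excluded by the hypothesis $G \ne K_1, K_2$. For the third bullet we would need $t\ell > \dist(G)$, but with $t=1$ this becomes $\ell > \dist(G)$, which contradicts the inequality $\dist(G) \ge \ell$ established above. Hence $G$ must fall into the fourth bullet, so $\dist(\mu(G)) = \dist(\mu_1(G)) \le \dist(G)$.

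Since there is no real obstacle — the work has already been done inside Theorem~\ref{thm:A&Sconj} — the only thing to be careful about is explicitly noting the twin argument that guarantees $\ell \le \dist(G)$, so that the reader sees why the third (exceptional) case cannot occur when $t=1$. No additional machinery beyond the theorem itself is required.
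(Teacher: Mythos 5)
Your proposal is correct and matches the paper's own (one-line) justification: the corollary follows from Theorem~\ref{thm:A&Sconj} with $t=1$ because isolated vertices are mutual twins, so $\dist(G)\ge \ell = t\ell$, ruling out the exceptional third case. Your write-up just makes this reasoning slightly more explicit than the paper does.
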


In summary, for traditional Mycielskian graphs, the only exceptions are $K_1$ and $K_2$. We note that $K_2$ is an unsurprising exception since $\mu(K_2)=C_5$ is, in a sense, an exception among cycles, since it is the only cycle with distinguishing number 3 that is realizable as a Mycielskian graph. Furthermore, we proved that for generalized Mycielskian graphs with $t > 1$, the only exception is when $\gm$ has so many isolated vertices that their number exceeds $\dist(G)$.

We note here that we have not proved that $\dist(G)=\dist(\gm)$. In fact, generalized Mycielskians of complete graphs show us that $\dist(G)$ and $\dist(\gm)$ may be arbitrarily far apart. We have $\dist(K_n) = n$ always. On the other hand, for $n \ge 3$, Proposition~\ref{prop:muKn} below  shows that $\dist(\mu(K_n)) = \lceil~\sqrt{n}~\rceil$. Additionally, if $n \ge 3$ and $t \ge \log_2 n-1$, then $\dist(\mu_t(K_n)) = 2$. Using white as color 1 and red as color 2, Figure~\ref{fig:muK3} shows the 2-distinguishing colorings described in Proposition~\ref{prop:muKn} for $\mu(K_3)$ and $\mu_2(K_3)$.

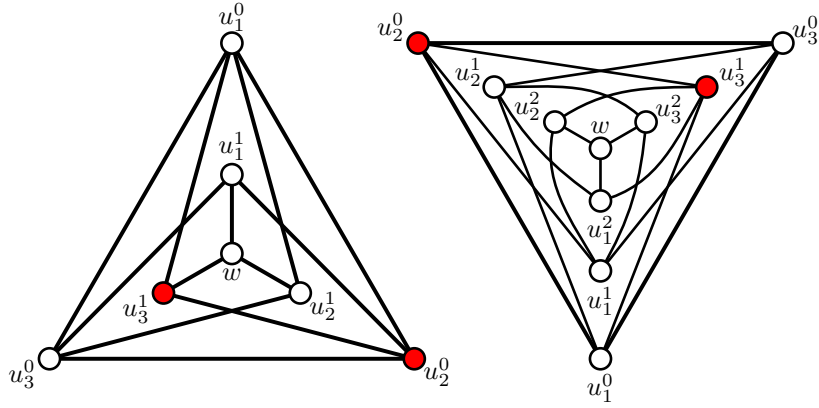
\begin{figure}[ht]
  \centering
  \begin{tikzpicture}[scale=.7]
\draw[black!100,line width=1.5pt] (-30:4) -- (-150:4);
\draw[black!100,line width=1.5pt] (-30:4) -- (90:4); 
\draw[black!100,line width=1.5pt] (90:4) -- (-150:4); 
\draw[black!100,line width=1.5pt] (-30:1.5) -- (-150:4); 
\draw[black!100,line width=1.5pt] (-30:1.5) -- (90:4); 
\draw[black!100,line width=1.5pt] (90:1.5) -- (-150:4);
\draw[black!100,line width=1.5pt] (-30:4) -- (-150:1.5); 
\draw[black!100,line width=1.5pt] (-30:4) -- (90:1.5); 
\draw[black!100,line width=1.5pt] (90:4) -- (-150:1.5);
\draw[black!100,line width=1.5pt] (0,0) -- (-150:1.5);
\draw[black!100,line width=1.5pt] (0,0) -- (90:1.5); 
\draw[black!100,line width=1.5pt] (0,0) -- (-30:1.5);
\draw[fill=red!100,line width=1] (-30:4) circle (.2);
\draw[fill=white!100,line width=1] (90:4) circle (.2);
\draw[fill=white!100,line width=1] (-150:4) circle (.2);
\draw[fill=white!100,line width=1] (-30:1.5) circle (.2);
\draw[fill=white!100,line width=1] (90:1.5) circle (.2);
\draw[fill=red!100,line width=1] (-150:1.5) circle (.2);
\draw[fill=white!100,line width=1] (0,0) circle (.2);
\draw (90:4.5) node{$u^0_1$};
\draw (-30:4.5) node{$u^0_2$};
\draw (-150:4.6) node{$u^0_3$};
\draw (90:2.05) node{$u^1_1$};
\draw (-30:2) node{$u^1_2$};
\draw (-150:2.1) node{$u^1_3$};
\draw (0,-.4) node{$w$};
\begin{scope}[shift={(7,2)},yscale=-1]
\draw[black!100,line width=1.5pt] (-30:4) -- (-150:4);
\draw[black!100,line width=1.5pt] (90:4) -- (-150:4);
\draw[black!100,line width=1.5pt] (-30:4) -- (90:4);
\draw[black!100,line width=1pt] (-30:4) -- (-150:2.33);
\draw[black!100,line width=1pt] (90:4) -- (-150:2.33);
\draw[black!100,line width=1pt] (-30:4) -- (90:2.33);
\draw[black!100,line width=1pt] (-30:2.33) -- (-150:4);
\draw[black!100,line width=1pt] (90:2.33) -- (-150:4);
\draw[black!100,line width=1pt] (-30:2.33) -- (90:4);
\draw[black!100,line width=1pt] (-30:1) to[out=-135,in=0] (-150:2.33);
\draw[black!100,line width=1pt] (90:1) to[out=215,in=60] (-150:2.33);
\draw[black!100,line width=1pt] (-150:1) to[out=-255,in=-120] (-270:2.33);
\draw[black!100,line width=1pt] (-30:1) to[out=95,in=-60] (-270:2.33);
\draw[black!100,line width=1pt] (90:1) to[out=-15,in=120] (-30:2.33);
\draw[black!100,line width=1pt] (210:1) to[out=325,in=180] (-30:2.33);
\draw[black!100,line width=1pt] (-30:1) -- (0,0);
\draw[black!100,line width=1pt] (90:1) -- (0,0);
\draw[black!100,line width=1pt] (-150:1) -- (0,0);
 \draw[fill=white!100,line width=1] (-30:4) circle (.2);
 \draw[fill=white!100,line width=1] (90:4) circle (.2);
 \draw[fill=red!100,line width=1] (-150:4) circle (.2);
 \draw[fill=white!100,line width=1] (-30:1) circle (.2);
 \draw[fill=white!100,line width=1] (90:1) circle (.2);
 \draw[fill=white!100,line width=1] (-150:1) circle (.2);
  \draw[fill=red!100,line width=1] (-30:2.33) circle (.2);
 \draw[fill=white!100,line width=1] (90:2.33) circle (.2);
 \draw[fill=white!100,line width=1] (-150:2.33) circle (.2);
 \draw[fill=white!100,line width=1] (0,0) circle (.2);
 \draw (90:4.55) node{$u^0_1$};
\draw (-30:4.5) node{$u^0_3$};
\draw (-150:4.6) node{$u^0_2$};
 \draw (90:2.9) node{$u^1_1$};
\draw (-30:2.9) node{$u^1_3$};
\draw (-150:2.9) node{$u^1_2$};
 \draw (90:1.55) node{$u^2_1$};
\draw (-30:1.5) node{$u^2_3$};
\draw (-150:1.6) node{$u^2_2$};
\draw (0,-.4) node{$w$};
\end{scope}
\end{tikzpicture}
  \caption{A 2-distinguishing coloring of $\mu(K_{3})$ and $\mu_2(K_{3}).$ 
  } \label{fig:muK3}
\end{figure}

\begin{prop}\label{prop:muKn}
Let $n \ge 3$ and $t\geq 1$. Let $k\in \mathbb{N}$ be the least value satisfying $k^{t+1} \ge n$. Then $\dist(\mu_t(K_n))=k$.
\end{prop}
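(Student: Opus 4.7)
My plan is to identify $\aut(\mu_t(K_n))$ explicitly and then recast the distinguishing condition as a combinatorial problem about distinct color sequences. Since $n \geq 3$ and $K_n$ is neither $K_1$, $K_2$, nor a star, Lemma~\ref{lem:master} tells us that every automorphism $\phi$ of $\mu_t(K_n)$ fixes the root $w$. Because $\phi$ preserves distances and $d(u_i^s,w) = t-s+1$ is constant on each level, $\phi$ also preserves each level set $R_s = \{u_1^s,\ldots,u_n^s\}$ setwise, acting there as some permutation $\sigma_s \in S_n$.

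The key observation is that the $\sigma_s$ are all equal. Between consecutive levels of $\mu_t(K_n)$, one has $u_i^s \sim u_j^{s+1}$ iff $i \neq j$, so the only non-adjacencies across $R_s$ and $R_{s+1}$ form the perfect matching $\{u_i^s u_i^{s+1} : 1 \le i \le n\}$. Preservation of this matching by $\phi$ forces $\sigma_s(i) = \sigma_{s+1}(i)$ for every $i$, and the $K_n$-edges within $R_0$ impose no additional restriction since any permutation preserves a clique. Hence $\aut(\mu_t(K_n)) \cong S_n$, where $\sigma \in S_n$ acts via $u_i^s \mapsto u_{\sigma(i)}^s$ for all $s$ and fixes $w$.

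With this, associate to each index $i$ the color sequence $\tau_i = (c(u_i^0), c(u_i^1), \ldots, c(u_i^t))$. A coloring $c$ using $k$ colors is distinguishing if and only if $\tau_1,\ldots,\tau_n$ are pairwise distinct: if some $\tau_i = \tau_j$ with $i \ne j$, the transposition $(i\,j)$ lifts to a nontrivial color-preserving automorphism; conversely, if all $\tau_i$ are distinct, any nontrivial $\sigma$ moves some index $i$ to a $j$ with $\tau_i \neq \tau_j$. The color assigned to $w$ is immaterial since $w$ is fixed by every automorphism.

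The bounds then follow immediately. There are only $k^{t+1}$ possible sequences of length $t+1$ over a palette of size $k$, so having $n$ distinct sequences requires $k^{t+1} \geq n$, giving $\dist(\mu_t(K_n)) \geq k$. For the upper bound, whenever $k^{t+1} \geq n$ with $k$ minimal, choose any $n$ distinct sequences in $\{1,\ldots,k\}^{t+1}$ and define $c(u_i^s)$ to be the $(s+1)$-th entry of the $i$-th chosen sequence, coloring $w$ arbitrarily. The main step is really just pinning down the automorphism group; once the uniformity of the $\sigma_s$ across levels is established via the between-level matching, the rest is a pigeonhole count.
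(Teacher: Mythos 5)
Your proposal is correct and follows essentially the same route as the paper: both fix $w$ via Lemma~\ref{lem:master}, use distances from $w$ to preserve levels setwise, use the non-adjacency matching $\{u_i^su_i^{s+1}\}$ between consecutive levels to force the level permutations to coincide, and then reduce the problem to a pigeonhole count of distinct color sequences of length $t+1$ over $k$ colors (the paper's base-$k$ digit coloring is just one explicit choice of $n$ distinct sequences). Your explicit identification of $\aut(\mu_t(K_n))\cong S_n$ is a slightly cleaner packaging of the same argument, but not a different method.
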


\begin{proof}
Let $k$ be the least value satisfying $k^{t+1} \geq n$. Since $k^{t+1} > n{-}1$, the base-$k$ representation of $n{-}1$ has at most $t{+}1$ digits with each digit  between 0 and $k{-}1$. For each $1 \le i \le n$, let $r_i$ be the representation of $i{-}1$ in base $k$, with leading 0s appended so that $r_i$ has $t{+}1$ digits. 

We give a $k$-coloring of $\mu_t(K_n)$ as follows: give $w$ color $1$ and for $1 \le i \le n$ and $0 \le s \le t$, give $u_i^s$ color $c{+}1$ if the $(s{+}1)$-st digit in $r_i$ is $c$. Since $0 \leq c \leq k-1$, this is a $k$-coloring of $\gm$. We will prove that this $k$-coloring is distinguishing. 

Given any $i$ and $j$, with $i \ne j$ there is an $\hat{s}$ with $0 \le \hat{s} \le t$ such that $r_i$ and $r_j$ are different in digit $\hat{s}{+}1$. Therefore, at level $\hat{s}$, vertices $u_i^{\hat{s}}$ and $u_j^{\hat{s}}$ receive different colors. 

By Lemma~\ref{lem:master}, every automorphism of $\mu_t(K_n)$ fixes $w$. Since automorphisms preserve distances, the levels are fixed setwise by every automorphism. Moreover, by construction, for $1 \le s \le t$ and $1 \le i \le n$, the only non-neighbor of $u_i^s$ at level $s{-}1$ is $u_i^{s{-}1}$. Since automorphisms preserve non-adjacency, $\phi(u_i^{\hat{s}}) = u_j^{\hat{s}}$ for some $\hat{s}$ if and only if $\phi(u_i^s) = u_j^s$ for all $0 \le s \le t$. 

However, we have shown for each $i \neq j$ there exists an $\hat{s}$ where the colors on $u_i^{\hat{s}}$ and $u_j^{\hat{s}}$ differ. Thus, to preserve the color classes, an automorphism $\phi$ must have $\phi(u_i^s) = u_i^s$ for all $1 \le i \le n$ and $0 \le s \le t$. Thus, this coloring is $k$-distinguishing and so $\dist(\mu_t(K_n)) \le k$. 

Let $\ell \in \mathbb{N}$ such that $\ell < k$. Since $k$ is the least value satisfying $k^{t+1} \geq n$, it must be the case that $\ell^{t+1} < n$. We claim there does not exist an $\ell$-distinguishing coloring of $\mu_t(K_n)$.

There are at most $\ell^{t+1}$ lists of the form $(c_0,\ldots,c_t)$ with $1 \le c_s \le \ell$ for each $0 \le s \le t$. Hence, by Pigeonhole Principle, in any $\ell$-coloring of $\mu_t(K_n)$, there exist distinct $i$ and $j$ such that the colors of $u_i^s$ and $u_j^s$ agree for each $0 \le s \le t$. Then, the automorphism $\phi$ with $\phi(u_i^s) = u_j^s$ and $\phi(u_j^s) = u_i^s$ for each $0 \le s \le t$ and $\phi(x) = x$ for all other vertices $x$, preserves the color classes. Hence, there does not exist an $\ell$-distinguishing coloring of $\mu_t(K_n)$ for all $\ell < k$. It follows that $\mu_t(K_n) \geq k$ and, therefore, $\mu_t(K_n) = k$.\end{proof}

\section{Acknowledgments}

The work described in this article is a result of a collaboration made possible by the Institute for Mathematics and its Applications' Workshop for Women in Graph Theory and Applications,
August 2019.

\bibliographystyle{plain}
\bibliography{MycielskiBib}

\end{document}